\numberwithin{equation}{section}
\newtheorem{theoreme}{Theorem}[section]
\newtheorem{proposition}{Proposition}[section]
\newtheorem{remarque}[theoreme]{Remark}
\newtheorem{lemme}[theoreme]{Lemma}
\newtheorem{corollaire}[theoreme]{Corollary}
\newtheorem{definition}[theoreme]{Definition}
\newenvironment{proof}[1][Proof]{\noindent \textbf{#1.}~ }
{\hfill\rule{2mm}{2mm} \vspace{\parskip} }
\newcommand{\R}{\ensuremath{\mathbb R}}
\newcommand{\val}{\mathsf{val}}
\newcommand{\PP}{\ensuremath{\mathbb P}}
\newcommand{\E}{\ensuremath{\mathbb E}}
\newcommand{\NN}{\ensuremath{\mathbb N}}
\newcommand{\N}{\ensuremath{\mathbb N}}
\newcommand{\ind}{\ensuremath{\mathds 1}}
\newcommand{\xx}{\ensuremath{\mathbf x}}
\newcommand{\yy}{\ensuremath{\mathbf y}}
\newcommand{\Id}{\ensuremath{\operatorname{Id}}}
\newcommand{\De}{\Delta}
\newcommand{\jj}{\ensuremath{\mathbf j}}
\newcommand{\la}{\lambda}
\newcommand{\ep}{\varepsilon}
\newcommand{\al}{\alpha}
\newcommand{\be}{\beta}
\newcommand{\si}{\sigma}
\newcommand{\ga}{\gamma}
\definecolor{darkblue}{rgb}{0,0,0.7} % darkblue color
\newcommand{\darkblue}{\color{darkblue}} % darkblue command
\newcommand{\defn}[1]{\emph{\darkblue #1}} 
\title{Constant payoff in absorbing games}
\author{\textsc{Miquel Oliu-Barton}\\[0.25cm]
\small Universit\'e Paris-Dauphine \\ 
\small miquel.oliu.barton@normalesup.org}
\date{February 2020}
\begin{document}
\maketitle

%\vspace{-2.75cm}
\textbf{Abstract}. Oliu-Barton and Ziliotto \cite{OBZ18} proved that the constant payoff property holds 
for discounted stochastic games, as conjectured by Sorin, Venel and Vigeral ~\cite{SVV10}. That is, the existence of a pair of asymptotically optimal strategies so that the average rewards are constant on any fraction of the game. That a similar property holds for stochastic games with an arbitrary evaluation of the stage rewards is still open. In this paper, we prove that the constant constant payoff property holds for a class of stochastic games %for arbitrary evaluations of the stage rewards. More precisely, for stochastic games so that for each state, either it leads to a set of absorbing states or to a singleton, a class that 
which includes the well-known model of absorbing games and stochastic games with two states. 

%one each fraction of the game the average rewards is constant. 
%\end{small}
\tableofcontents 
\section{Introduction}
\paragraph{Model.}
 Stochastic games were introduced by Shapley \cite{shapley53} in order to model a repeated interaction between two opponent players in a changing environment.  The game proceeds in stages. At each stage $m\in \N$ of the game, players play a zero-sum game that depends on a state variable. Formally, knowing the current state $k_m$, Player 1 chooses an action $i_m$ and Player 2 chooses an action $j_m$. Their choices occur independently and simultaneously and have two consequences: first, they produce a stage payoff $g(k_m,i_m,j_m)$ which is observed by the players and, second, they determine the law $q(k_m,i_m,j_m)$ of the next period's state $k_{m+1}$. Thus, the sequence of states follows a Markov chain controlled by the actions of both players.  To any sequence of nonnegative weights $\theta=(\theta_m)$
%discount rate $\lambda \in (0,1]$ 
and any initial state $k$ corresponds the $\theta$-weighted average stochastic game %, denoted by $\Ga_\theta(k)$
 which is one in which Player 1 maximizes the expectation of 
$$\sum_{m \geq 1} \theta_m g(k_m,i_m,j_m)$$ given that $k_1=k$, while Player 2 minimizes this same amount. A crucial aspect in this model is that \emph{the current state is commonly observed} by the players at every stage. Another one is stationarity: the transition function and stage payoff function do not change over time. 

A $\theta$-evaluated stochastic game is thus described by a tuple $(K,I,J,g,q,k,\theta)$ where $K$ is a set of states, $I$ and $J$ are the sets of actions of both players, $g:K\times I\times J\to \R$ is the reward function, $q:K\times I\times J\to \De(K)$ is the transition function, $k$ is the initial state and $\theta$ is a normalized sequence of nonnegative weights, i.e. so that $\sum_{m\geq 1}\theta_m=1$ with no loss of generality. A $\la$-discounted stochastic game is one where $\theta_m=\lambda(1-\lambda)^{m-1}$ for all $m\ge 1$ for some $\la\in(0,1]$. A \emph{$T$-stage stochastic game} is one where $\theta_m=\frac{1}{T} \ind_{\{m\leq T\}}$ for all $m\ge 1$ for some $T\in \N$. 
%\paragraph{Finiteness assumption.} 
Like in Shapley's seminal paper \cite{shapley53}, we assume throughout this paper that $K$, $I$, $J$ are finite sets, and identify the set $K$ with $\{1,\dots,n\}$.

\paragraph{Selected past results.}  Every stochastic games $(K,I,J,g,q,k,\theta)$ has a value, denoted by $v_\theta^k$. Although only stated for the discounted case, this result follows from Shapley~\cite{shapley53}.
%From Shapley's seminal work, it follows that every $\theta$-evaluated stochastic game has a value,  denoted by $v_\theta^k$. 
We use the notation $v_\la^k$ and $v^k_T$ to refer respectively to the value of a $\la$-discounted and a $T$-stage stochastic game. Bewley and Kohlberg~\cite{BK76} proved the convergence of $v_\la^k$ as $\la$ goes to $0$ and the convergence of $v_T^k$ as $T\to +\infty$, to the same limit. %Furthermore, there exists a sleea sel
%Kolhberg~\cite{kohlberg74} introduced the class of absorbing games, that is a stochastic game where all states except one are absorbing, and proved for this games the existence of a value $v^k$. T
%in the sense that once they are reached the state does not change any more.
Mertens and Neyman~\cite{MN81,MN82} proved the existence of the value $v^k$, that is, Player~1 can ensure that the average reward is at least $v^k$ in any $T$-stage stochastic game with $T$ large enough, and similarly Player~2 can ensure that the average reward is at most $v^k$. Neyman and Sorin~\cite{NS10} studied stochastic games with a random number of stages, and proved that the values converge to $v^k$ as the expected number of stages tends to $+\infty$, and the %number of. Assuminduration clock. That is, at each stage, the players receive
%an additional signal which carries information about the number of remaining stages. Assuming that the
expected number of remaining stages decreases throughout the game. 
Ziliotto~\cite{ziliotto16} proved that $v_\theta^k$ converges to $v^k$ as $\|\theta\|:=\max_{m\geq 1}\theta_m$ goes to $0$ provided that $\sum_{m\geq 1} |\theta_{m +1}^p - \theta_m^p |$ converges to zero for some $p > 0$. %, then the  values of the weighted-average stochastic games converge, and the limit is equal to the value of the stochastic game. %Furthermore, the strategy of Player~1 that consists in playing, at each stage $m  \geq  1$, an optimal stationary strategy $x_{\la_m}  \in   \De (I)^K$ of the  $\la_m$-discounted game is asymptotically optimal for Player 1, where  $\la  m := \theta_m / \sum_{m' \geq m}  \ \theta_{m'}$ is the relative weight of the current stage relative to the weights of the remaining stages. 
The value was recently characterized by Attia and Oliu-Barton~\cite{AOB19}. 

\paragraph{The constant-payoff property.} 
A remarkable property, referred to as the \emph{constant-payoff property} was proved by Sorin, Venel and Vigeral \cite{SVV10} in the framework of single decision-maker problems, and conjectured to hold in any stochastic game. Their conjecture goes as follows. 
\begin{itemize}
\item \emph{The discounted case}. %We distinguish the weak and the strong constant payoff property.  
%It consists %in the inFormally, the former consists in for each $\la$ small enough, the 
For any sufficiently small $\lambda$ there exists a pair of optimal strategy so that % for any initial state $k$, %
%This property is called \textit{constant-payoff property}.
%Starting from state $k_1$, not only should a rational and patient decision-maker expect to obtain approximately $v^*(k_1)$ at the end of the game, but his payoff should grow linearly over time. That is,  
$\sum_{m=1}^M \lambda(1-\lambda)^{m-1} g(k_m,i_m,j_m)$ is approximatively equal to $(\sum_{m=1}^M\lambda(1-\lambda)^{m-1})v^k$ in expectation. 
\item \emph{The general case}. 
%_\la^k$, where $v_\la^k$ denotes the value of the $\la$-discounted game with initial state $k$. 
%*(k)$, where $v^* \in \R^K$ denotes the vector of limit values. % of the game. 
%The strong constant payoff property consists of the property to be satisfied for any such couple. 
Because $\sum_{m\geq 1}\lambda(1-\lambda)^{m-1}=1$, Sorin~\cite{sorin02} proposed the interpretation of $\sum_{m=1}^M \lambda(1-\lambda)^{m-1}$ as the fraction of the game that has been played at stage $M$. The notion of a fraction of the game extends to any evaluation $\theta$, and the general constant-payoff conjecture is the existence of a pair of strategies so that for any sufficiently small $\|\theta\|$, 
$\sum_{m=1}^M \theta_m g(k_m,i_m,j_m)$ is approximatively equal to $(\sum_{m=1}^M\theta_m) v^k$ in expectation. 
\end{itemize}
The constant-payoff conjecture was established Oliu-Barton and Ziliotto~\cite{OBZ18} for discounted stochastic games, and the problem remains open for arbitrary evaluations of the rewards.

\paragraph{Main result.} In this paper we solve the general constant-payoff conjecture for a class of stochastic games (see Theorem~\ref{CPA2} below) which % Namely, for stochastic game so that, for each state $1\leq \ell \leq n$, the set 
%$$\{\ell'\in K\, | \, \exists (i,j)\in I\times J, \ q(\ell'\,|\, \ell,i,j)>0\}$$
%is either a singleton or a subset of the set of absorbing states $\{\ell\in K\, |\, q(\ell\,|\, \ell,i,j)=1, \ \forall (i,j)%\in I\times J\}$. Though restrictive, this class
 includes the well-known model of absorbing games introduced by Kohlberg\cite{kohlberg74}, and stochastic games with two states. % which has received a lot of attention. 
(It is worth noting that, for absorbing games, the discounted constant-payoff property was proved by Sorin and Vigeral~\cite{SV19} using an independent approach.)

%Lehrer and Sorin\cite{LS93} proved that Markov decision problems satisfy this property as soon as the values converge as the discount rate goes to $0$, and that the convergence is uniform in the state space.
%Furthermore, they conjectured that the constant payoff property should hold for any class of two-player zero-sum stochastic game that satisfies the same assumptions. As the values of finite stochastic games converge, and their convergence is uniform (by finiteness), the conjecture is directly concerned with this class of games.

%as $\|\theta\|:=\max_{m\geq 1}\theta_m$ goes to $0$ provided that $\theta_m
\section{Stochastic games}
In the sequel, let $(K,I,J,g,q,k,\theta)$ denote a $\theta$-evaluated stochastic game. In order to state our results formally, we start by recalling some definitions.
\subsection{Strategies}
The sequence $(k_1,i_1,j_1,...,k_m,i_m,j_m,...)$ generated along the game is called \textit{a play}. The set of plays is $(K\times I\times J)^{\N}$.  
\begin{definition}\label{strat}
\begin{itemize}
\item[]
\item A \defn{strategy} for a player specifies a mixed action to each possible set of past observations. Formally, a strategy for Player~1 is a collection of maps $\sigma^1=(\sigma^1)_{m \geq 1}$, where $\sigma^1_m:(K \times I \times J)^{m-1} \times K \rightarrow \Delta(I)$. Similarly, a strategy for Player~2 is a collection of maps $\sigma^2=(\sigma^2)_{m \geq 1}$, where $\sigma^2_m:(K \times I \times J)^{m-1} \times K \rightarrow \Delta(J)$.
\item A \defn{stationary strategy} is one that plays according to the current state only. Formally, a stationary strategy for Player~1 is a mapping $x:K\to \De(I)$, and a stationary strategy for Player~$2$ is a mapping $y:K\to \De(J)$. 
\end{itemize}
\end{definition}
%\item[$(iii)$] A \defn{strategy profile} is a pair of strategies $(\si,\tau )$.
%\end{enumerate}
% \end{definition}
\noindent \textbf{Notation.} The sets of strategies for Player 1 and 2 are denoted by $\Sigma$ and $\mathcal{T}$, respectively, and the sets of stationary strategies by $\De(I)^n$ and $\De(J)^n$.  %, respectively. \\ %, will be considered as subsets of $\R^{K\times I}$ and $\R^{K\times J}$ respectively. \\
 %One crucial aspect of stochastic games is that the past histories contains the current state. 
  %As already noticed by Shapley \cite{shapley53}, this property yields the existence of the value $v_\la(k)$ and of optimal stationary strategies $x_\la:K\to \De(I)$ and $y_\la:K\to \De(J)$. \\ 
%  \textcolor{red}{On devrait dire quelque chose sur le perfect monitoring non ? Cela me gêne de définir les stratégies comme cela, d'autant plus que la structure d'information ne joue aucun rôle...}\\
For any pair $(\si,\tau) \in \Sigma\times \mathcal{T}$ we denote by $\PP^{k}_{\si,\tau }$ the unique probability measure on the set of plays $(K\times I\times J)^\N$ induced by $(\si,\tau)$, $k_1=k$ and $q$.   (Note that the dependence on the transition function $q$ is omitted). This probability is well-defined by the Kolmogorov extension theorem, and the expectation with respect to the probability $\PP^k_{\si,\tau }$ is denoted by $\E^k_{\si,\tau}$. %e existence of  % in the stochastic game with initathe pair 
%such that, for any finite play $h^n=(k_1,i_1,j_1,\dots,k_{n-1},i_{n-1},j_{n-1},k_n)$ one has
%\begin{eqnarray*}\prob^k_{\si,\tau }(h_n) =\prod_{m=1}^{n-1} \si_m[h^n_m](i_m) \tau _m[h^n_m](j_m)q(k_{m+1}|k_m,i_m,j_m) \, .\end{eqnarray*} 
% where $h^n_m$ is the restriction of $h^n$ to the first $m$ stages, i.e. $h^n_1:=k_1$ and for all $2\leq m<n$: 
% $$ h^n_m:=(k_1,i_1,j_1,\dots, k_{m-1},i_{m-1},j_{m-1},k_m)\, .$$
%The extension to infinite plays follows from the Kolmogorov extension theorem. Thus, $\PP^{k}_{\si,\tau }$ is the unique probability measure on plays induced by the pair $(\si,\tau )$ in the stochastic game starting from state $k$. (Note that the dependence on the transition function $q$ is omitted). The expectation with respect to the probability $\PP^k_{\si,\tau }$ is denoted by $\E^k_{\si,\tau }$. % are is denoted by  denote by $\E^k_{\si,\tau }$, and the expected payoff by:
% Formally, 
\subsection{The cumulated payoffs}
For each normalized sequence of nonnegative weights $\theta$ we introduce the clock function $\varphi(\theta,\, \cdot\,):[0,1]\to \N$ by setting
$$\varphi(\theta,t):=\inf\{M\geq 1,\ \sum\nolimits_{m=1}^M \theta_m\geq t\}\qquad \forall t \in [0,1]\,.$$
The \defn{cumulated payoff at time t} is defined for any pair of strategies $(\si,\tau)\in \Sigma\times \mathcal{T}$ as 
$$\ga_\theta^k(\si,\tau;t):=\E_{\si,\tau}^k\left[\sum\nolimits_{m= 1}^{\varphi(\theta,t)} \theta_m g(k_m,i_m,j_m)\right]\,.$$
Note that the case $t=1$ corresponds to the expectation of the $\theta$-evaluation of the stage rewards. 
\subsection{Optimal and asymptotically optimal strategies}
\begin{definition} 
An \defn{optimal strategy} of Player~1 is an element $\si\in \Sigma$ so that, for all $\tau\in \mathcal{T}$,  
$$\ga_\theta^k(\si,\tau;t)\geq v_\theta^k\,.$$
An optimal strategies of Player~2 is defined in a similar way. That is, it is an element $\tau \in \Sigma$ so that, for all $\si \in \Sigma$ one has $\ga_\theta^k(\si,\tau;t)\leq v_\theta^k$.
\end{definition}
\begin{definition} Let $\ep\geq 0$.  A family of strategies $(\si_\theta^\ep)$ indexed by $\theta$ is asymptotically $\ep$-optimal for Player~1 if for any $\tau \in \mathcal{T}$, 
$$\liminf_{\la\to 0}\ga_\la^k(x_\la^\ep,\tau)\geq v^k-\ep\,.$$ 
Asymptotically $\ep$-optimal for Player~2 are defined in a symmetric way. 
\end{definition}

%\paragraph{$\theta$-evaluated value, limit value and value.} The existence of the $\theta$-evaluated values

% so that the $\theta$-evaluated stochastic game admits a value, denoted by $v_\theta^k$, and the value $v^k$
%exists by Mertens and Neyman~\cite{MN81,MN82}. That is, for each $\ep>0$ there exists strategies $(\si^\ep,\tau^\ep)$ and $T_\ep\in \N$ so that for any $T\geq T_\ep$ the strategy $\si^\ep$ ensures that 
%$\frac{1}{T}\sum_{m =1}^T g(k_m,i_m,j_m)$ is at least $v^k-\ep$ in expectation while $\tau^\ep$ %ensures that  $\frac{1}{T}\sum_{m =1}^T g(k_m,i_m,j_m)$ is at most $v^k+\ep$ in expectation. 
%urthermore, the values $(v_\theta^k)$ converge to $v^k$ as $\|\theta\|:=\max_{m\geq 1}\theta_m$ goes to $0$ provided that $\sum_{m\geq 1} |\theta_{m +1}^p - \theta_m^p |$ converges to zero for some $p > 0$.

\subsection{The general constant payoff conjecture}
%This notion extends the function $\ga_\theta^k(\si,\tau)$ :=\ga_\theta^k(\si,\tau;1)$ for the total payoff. 
%The $L^\infty$ norm of $\theta$ is denoted by $\|\theta\|:=\max_{m\geq 1}\theta_m$. 
Sorin, Venel and Vigeral ~\cite{SVV10} conjectured the existence of a pair of asymptotically $0$-optimal strategies $(\si(\theta),\tau(\theta))$, indexed by $\theta$, and so that 
% (that is, they are $\ep$-optimal for any $\theta$ so that $\|\theta\|$ is small enough and, furthermore, 
$$\lim_{\|\theta\|\to 0}\ga_\theta^k(\si(\theta),\tau(\theta);t)=t v^k\qquad \forall t\in [0,1]\,.$$

%Furthermore, a family of asymptotically $0$-optimal strategies satisfying the constant-payoff is defined s follows. 
\paragraph{A family of asymptotically $0$-optimal strategies.}
%\paragraph{The Puiseux strategies.} 
Let $(x_\la,y_\la)$ a fixed family of optimal stationary strategies so that $\la\mapsto x^k_\la(i)$ and $\la\mapsto y_\la^k$ admit a Puiseux expansion near $0$ for all $(i,j)\in I\times J$. For each $\theta\in \De(\N)$ and $m\geq 1$ set $$\la_m=\frac{\theta_m}{\sum_{m' \geq m}  \ \theta_{m'}}\,.$$  %, and let $(Q_\la)$ be the corresponding family of stochastic matrices.  \in \R^{n\times n}$ be the transition matrix of the Markov chain induced by $(x_\la,y_\la)$. 
Define then the strategy pair $(\si^\theta,\tau^\theta)$ by setting,
\begin{equation}\label{defstrat}(\si^\theta_m,\tau^\theta_m):=(x_{\la^\theta_m},y_{\la^\theta_m})\qquad  \forall m\geq 1\,.\end{equation}
The family $(\si^\theta,\tau^\theta)$ is asymptotically $0$-optimal by Ziliotto \cite{ziliotto16}, so it is a good candidate for tackling the general constant-payoff conjecture.

\subsection{Main result}\label{mainOB15}
Consider the following symmetric conditions $(H1)$ and $(H2)$. These conditions preclude going from one state to another and back, and then to a third state, for a fixed pure stationary strategy of the opponent. 
\begin{itemize}
\item $(H1)$ There does not exist a triplet of different states $(\ell,\ell',\underline{\ell})$ and a tuple of actions $(i,\underline{i},j,i',j')$, $i\neq \underline{i}$ so that 
$$q(\ell' \,| \, \ell, i,j)q(\ell \,| \, \ell', i',j')q(\underline{\ell} \,| \, \ell, \underline{i},j)
>0\,.$$
\item $(H2)$ There does not exist a triplet of different states $(\ell,\ell',\underline{\ell})$ and a tuple of 
and a tuple of actions $(i,i',j,\underline{j}, j')$, $j\neq \underline{j}$ so that 
$$q(\ell' \,| \, \ell, i,j)q(\ell \,| \, \ell', i',j')q(\underline{\ell} \,| \, \ell, i,\underline{j})>0\,.$$
\end{itemize}

%q(\bar{\ell} \,| \, \ell, \bar{i}^\ell,j^\ell)
%for each state $1\leq \ell \leq n$, the set 
%$$\{\ell'\in K\, | \, \exists (i,j)\in I\times J, \ q(\ell'\,|\, \ell,i,j)>0\}$$
%is either a singleton or a subset of $\{\ell\in K\, |\, q(\ell\,|\, \ell,i,j)=1, \ \forall (i,j)\in I\times J\}$, satisfies the 

\begin{theoreme}\label{CPA2} 
Any stochastic game $(K,I,J,g,q,k,\theta)$ satisfying $(H1)$ and $(H2)$ satisfies general constant-payoff property. More precisely, the family of asymptotically $0$-optimal strategies defined in \eqref{defstrat} satisfies 
$$\lim_{\|\theta\|\to 0}\ga_\theta^k(\si^\theta,\tau^\theta;t)=t v^k \qquad \forall t\in [0,1]\,.$$
\end{theoreme}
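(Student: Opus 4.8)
The plan is to reduce the statement to a single assertion about the \emph{state process} generated by the candidate strategies, and then to prove that assertion by transferring the discounted constant-payoff theorem of Oliu-Barton and Ziliotto~\cite{OBZ18} to the piecewise-stationary, slowly-varying-discount regime in which $(\si^\theta,\tau^\theta)$ operates; the conditions $(H1)$ and $(H2)$ enter precisely to prevent the errors produced by the variation of the local discount factor from accumulating. \emph{First, the reduction.} Fix $t\in[0,1)$ and write $N:=\varphi(\theta,t)$, $R_N:=\sum_{m>N}\theta_m$, so that $R_N\to 1-t$ as $\|\theta\|\to 0$. Since $\si^\theta$ and $\tau^\theta$ are stationary on each stage, the continuation of $(\si^\theta,\tau^\theta)$ from stage $N+1$, conditionally on $k_{N+1}=\ell$, is exactly the pair attached through \eqref{defstrat} to the renormalised tail $\theta'=(\theta_{N+s}/R_N)_{s\ge 1}$ with initial state $\ell$ (indeed $\theta'_s/\sum_{m'\ge s}\theta'_{m'}=\la^\theta_{N+s}$). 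Hence
$$\ga_\theta^k(\si^\theta,\tau^\theta;1)-\ga_\theta^k(\si^\theta,\tau^\theta;t)=R_N\,\E^k_{\si^\theta,\tau^\theta}\big[\ga_{\theta'}^{k_{N+1}}(\si^{\theta'},\tau^{\theta'};1)\big].$$
By asymptotic $0$-optimality of the family \eqref{defstrat}, applied to every initial state (uniformly, as $K$ is finite) and with $\|\theta'\|\le\|\theta\|/R_N\to 0$, we get $\ga_{\theta'}^\ell(\si^{\theta'},\tau^{\theta'};1)=v^\ell+o(1)$, so the right-hand side is $R_N\,\E^k_{\si^\theta,\tau^\theta}[v^{k_{N+1}}]+o(1)$. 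Since $\ga_\theta^k(\si^\theta,\tau^\theta;1)\to v^k$ as well, the theorem follows once we establish
$$\lim_{\|\theta\|\to 0}\E^k_{\si^\theta,\tau^\theta}\big[v^{\,k_{\varphi(\theta,t)+1}}\big]=v^k\qquad\forall\,t\in[0,1),$$
a property we call $(\star)$; the case $t=1$ is just asymptotic $0$-optimality.

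\emph{The discounted benchmark.} For a fixed discount $\la$ and the optimal stationary pair $(x_\la,y_\la)$, the process $M_m:=\sum_{s<m}\la(1-\la)^{s-1}g(k_s,i_s,j_s)+(1-\la)^{m-1}v_\la^{k_m}$ is a martingale, since the Shapley equation holds with equality at $(x_\la,y_\la)$; hence $\ga_\la^k(x_\la,y_\la;t)=v_\la^k-(1-\la)^{\varphi(\la,t)}\E^k_{x_\la,y_\la}[v_\la^{k_{\varphi(\la,t)+1}}]$, and then, using $(1-\la)^{\varphi(\la,t)}\to 1-t$, Bewley and Kohlberg~\cite{BK76}, and the theorem of Oliu-Barton and Ziliotto~\cite{OBZ18}, one obtains $\E^k_{x_\la,y_\la}[v^{\,k_{\varphi(\la,t)+1}}]\to v^k$ for every $t$. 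Thus $(\star)$ holds for geometric evaluations, and the task is to upgrade it to the schedule $(\la^\theta_m)_m$; note that a naive bound $|\E[v_\la^{k_{m+1}}-v_\la^{k_m}\mid\mathcal F_m]|=O(\la^{1/r})$ per stage, summed over the $\Theta(1/\|\theta\|)$ stages preceding time $t$, is \emph{not} $o(1)$ --- so it is cancellations, not estimates, that are at stake.

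\emph{The role of $(H1)$ and $(H2)$.} This is the crux. From the Puiseux expansions of $\la\mapsto(x_\la,y_\la)$, for $\la$ small the support of $q(\cdot\mid\cdot,x_\la,y_\la)$ --- hence the recurrence structure of the induced Markov chain on $K$ --- does not depend on $\la$, and $\lim_\la v_\la$ is constant on each recurrent class. I claim that $(H1)$ and $(H2)$ force this structure to be \emph{absorbing-game-like}: the non-recurrent states split into levels, carrying a partial order independent of the small parameter, such that along any stationary phase --- hence along $(\si^\theta,\tau^\theta)$ --- the state process moves monotonically through the levels until it enters a recurrent class, a level once left not being returned to. (For absorbing games and two-state games this is immediate, as returning would require either a second non-recurrent state or three distinct states; $(H1)$ and $(H2)$ are exactly what extends it to the present class.) This serves two purposes. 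First, one shows that optimal discounted play does not leak value across the admissible transitions, so that $m\mapsto\E[v^{k_m}]$ is, up to $o(1)$, constant equal to $v^k$, not merely constant on each separate discounted phase. Second --- the hard point --- because a left level is not revisited, the per-transition errors caused by the change of the local discount from $\la^\theta_m$ to $\la^\theta_{m+1}$ cannot re-accumulate along a returning trajectory, which is the only mechanism that could otherwise defeat the transfer.

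\emph{Conclusion, and the main obstacle.} Partitioning $[0,1]$ into short windows, on $[0,t]$ the weights $\la^\theta_m$ are $O(\|\theta\|)$ and nearly constant on each window, so the $\theta$-game restricted to a window is, up to $o(1)$, a discounted game to which the benchmark applies; the absorbing-like monotonicity then lets one carry the state process across windows --- with $\E[v^{k_m}]$ constant inside each recurrent class and the level transitions absorbed into the window estimates --- and summing yields $\E^k_{\si^\theta,\tau^\theta}[v^{\,k_{\varphi(\theta,t)+1}}]=v^k+o(1)$, i.e.\ $(\star)$, which by the reduction proves the theorem. The step I expect to be genuinely hard is extracting from the combinatorial conditions $(H1)$ and $(H2)$ a workable form of the picture above --- absorbing-like, level-monotone state dynamics, stable under a slowly varying small discount, with no leakage of value --- precise enough to rule out accumulation of the discount-variation errors. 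Everything else uses only standard ingredients: Bewley and Kohlberg~\cite{BK76}, the Puiseux expansion of $\la\mapsto(x_\la,y_\la,v_\la)$, Ziliotto's asymptotic optimality~\cite{ziliotto16} of \eqref{defstrat}, and the discounted constant-payoff theorem of Oliu-Barton and Ziliotto~\cite{OBZ18}.
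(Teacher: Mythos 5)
Your reduction to the statement $(\star)$: $\lim_{\|\theta\|\to 0}\E^k_{\si^\theta,\tau^\theta}\bigl[v^{k_{\varphi(\theta,t)+1}}\bigr]=v^k$ is correct and is a genuinely different decomposition from the paper's (the paper instead proves that $\lim_{\|\theta\|\to 0}\ga_\theta^k(\si^\theta,\tau^\theta;t)$ exists and is independent of the vanishing family, and then identifies it with the discounted limit $tv^k$). But the heart of your argument --- the place where $(H1)$ and $(H2)$ are actually used --- is missing, and the structural claim you hang it on is not correct. You assert that $(H1)$--$(H2)$ force ``level-monotone'' dynamics in which a level, once left, is never returned to, and you rely on this to prevent the accumulation of the per-stage errors caused by the variation of $\la^\theta_m$. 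This is false already inside the class covered by the theorem: in a two-state game with both states nonabsorbing, the chain induced by $(x_\la,y_\la)$, hence by $(\si^\theta,\tau^\theta)$, can oscillate between the two states indefinitely --- in the paper's own analysis the limiting object is an inhomogeneous continuous-time Markov chain that may jump back and forth. What $(H1)$--$(H2)$ exclude is a specific pattern (a two-cycle together with a unilateral exit to a third state), not returns as such. So the mechanism by which you propose to transfer the discounted benchmark to the slowly varying schedule $(\la^\theta_m)$ does not exist in general, and you explicitly leave that step unproved; as it stands, $(\star)$ is not established and the proof is incomplete.

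For comparison, the paper's route is: $(H1)$--$(H2)$ imply the game is \emph{critical}, i.e.\ for each $\ep>0$ the Puiseux-optimal pair can be modified into asymptotically $\ep$-optimal stationary strategies whose induced matrices satisfy $Q_\la=\Id+A\la+o(\la)$ (the hypotheses guarantee that only the highest-order exit terms from each state matter for the limit payoff, so the modification costs at most $\ep$); then, via the estimates of Lemma~\ref{tecnico1} and Proposition~\ref{lp}, the time-changed state process $X^{k,\theta}_t=k_{\varphi(\theta,t)}$ converges to an inhomogeneous Markov process with generators $\bigl(\tfrac{1}{1-t}A\bigr)_t$ (Corollary~\ref{limitOB15}), so $\lim_{\|\theta\|\to 0}\ga^k_\theta(\si^\theta,\tau^\theta;t)$ exists and is the same for every vanishing family (Corollary~\ref{Pitheta} with Lemma~\ref{convth}); specializing to $\theta_m=\la(1-\la)^{m-1}$, for which $(\si^\theta,\tau^\theta)=(x_\la,y_\la)$ and the limit is $tv^k$ by the discounted constant-payoff property, identifies the limit. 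If you want to keep your scheme, the statement you actually need is precisely this evaluation-independence of the law of $k_{\varphi(\theta,t)}$, which yields your $(\star)$ as a by-product once the discounted case is known; proving it requires the criticality/limit-process analysis, not a no-return structure.
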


\begin{definition} A stochastic game $(K,I,J,g,q,k,\theta)$ is \defn{absorbing} if for every state $\ell\neq k$ is absorbing, i.e. $q(\ell\,|\, \ell,i,j)=1$ for all $(i,j)\in I\times J$.
\end{definition} 
The following result is a direct consequence of Theorem~\ref{CPA2}. 
\begin{corollaire}\label{CPA} Suppose that $(K,I,J,g,q,k,\theta)$ is an absorbing game. Then,
$$\lim_{\|\theta\|\to 0}\ga_\theta^k(\si^\theta,\tau^\theta;t)=t v^k \qquad \forall t\in [0,1]\,.$$
\end{corollaire}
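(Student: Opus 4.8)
\textbf{Proof plan for Corollary~\ref{CPA}.}
The plan is to deduce the statement from Theorem~\ref{CPA2} by checking that every absorbing game satisfies the hypotheses $(H1)$ and $(H2)$. Since the roles of the two players are symmetric, it suffices to verify $(H1)$; the verification of $(H2)$ is identical after interchanging the players. So the entire proof reduces to showing that in an absorbing game there is no triplet of distinct states $(\ell,\ell',\underline\ell)$ and no tuple of actions $(i,\underline i,j,i',j')$ with $i\neq\underline i$ such that $q(\ell'\mid\ell,i,j)\,q(\ell\mid\ell',i',j')\,q(\underline\ell\mid\ell,\underline i,j)>0$.

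First I would argue by contradiction: suppose such a triplet and such actions exist. Recall that in an absorbing game, every state $\ell'\neq k$ is absorbing, meaning $q(\ell'\mid\ell',i'',j'')=1$ for all $(i'',j'')\in I\times J$. Now examine the three factors. The factor $q(\ell\mid\ell',i',j')>0$ says that from state $\ell'$ one can reach state $\ell\neq\ell'$; since an absorbing state cannot be left, this forces $\ell'=k$, the unique non-absorbing state. Similarly, the factors $q(\ell'\mid\ell,i,j)>0$ and $q(\underline\ell\mid\ell,\underline i,j)>0$ with $\ell'\neq\ell$ and $\underline\ell\neq\ell$ (the three states being distinct) show that one can leave state $\ell$, hence $\ell=k$ as well. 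But then $\ell=\ell'=k$, contradicting the assumption that $\ell,\ell',\underline\ell$ are pairwise distinct. Therefore no such configuration exists and $(H1)$ holds; by symmetry $(H2)$ holds too.

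Once $(H1)$ and $(H2)$ are established, Theorem~\ref{CPA2} applies directly to the absorbing game $(K,I,J,g,q,k,\theta)$ and yields $\lim_{\|\theta\|\to 0}\ga_\theta^k(\si^\theta,\tau^\theta;t)=t\,v^k$ for all $t\in[0,1]$, which is exactly the assertion of the corollary. There is essentially no obstacle here: the only point requiring a moment's care is keeping track of which of the three states must coincide with the non-absorbing state $k$, and noticing that two of them are forced to equal $k$, which already breaks distinctness — the action indices $i,\underline i,i',j,j'$ play no role whatsoever in the argument, so the hypothesis $i\neq\underline i$ in $(H1)$ (resp.\ $j\neq\underline j$ in $(H2)$) is not even needed for absorbing games.
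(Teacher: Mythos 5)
Your proposal is correct and follows the paper's own route: the paper states Corollary~\ref{CPA} as a direct consequence of Theorem~\ref{CPA2}, and your verification that $(H1)$ and $(H2)$ hold vacuously in an absorbing game (any positive-probability transition between two distinct states forces the source state to be the unique non-absorbing state $k$, so the required configuration would need two distinct non-absorbing states) is exactly the step the paper leaves implicit. No gaps; your closing remark that the action-inequality hypotheses play no role here is also accurate.
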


\section{Proofs} % ~\ref{CPA2}}
Theorem~\ref{CPA2} relies on the two following properties, which hold for all $t\in [0,1]$: % and which imply our result:
%The proof of this result relies on the two following properties, which hold for all $t\in [0,1]$.
\begin{itemize}
\item $\lim_{\la\to 0}\ga_\la^k(x_\la,y_\la;t)=t v^k$.
\item $\lim_{\|\theta\|\to 0}\ga_\theta^k(\si^\theta,\tau^\theta;t)$ exists and does not depend on the family of vanishing evaluations.
\end{itemize}

We start by recalling some useful results from \cite{JOB13, OBZ18} on discounted stochastic games. Then, for the sake of simplicity, we will establish Corollary~\ref{CPA} firs. Finally, we will extend this result to the class of stochastic games of Theorem~\ref{CPA2}.
%For simplicity of the exposition, we will prove our result for absorbing games first, and then establish the extension to the larger class of games of Theorem~\ref{CPA2}. 

\subsection{Preliminaries}
We recall the following results from \cite{JOB13, OBZ18} for discounted stochastic games. 
\begin{theoreme}\label{aer} For every $t\in [0,1]$ the limit 
$\Pi_t:=\lim_{\la\to 0} \sum_{m\geq 1}\la(1-\la)^{m-1}Q_\la^{m-1}\in \R^{n\times n}$ exists.
Furthermore, there exist $p\in \N$, which stands for the number of payoff-relevant cycles, $\Phi\in \R^{n\times p}$, $A\in \R^{p\times p}$ and $M\in \R^{p\times n}$
so that 
$$\Pi_t=\int_0^t \Phi e^{-\ln(1-s)A} M ds\,.$$
\end{theoreme}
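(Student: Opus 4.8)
The plan is to recover this statement, due to \cite{JOB13,OBZ18}, from the spectral structure of $Q_\la$, the transition matrix on $K$ induced by the fixed family of optimal stationary strategies $(x_\la,y_\la)$; recall that $\ga_\la^k(x_\la,y_\la;t)$ is the $k$-th coordinate of $\big(\sum_{m=1}^{\varphi(\la,t)}\la(1-\la)^{m-1}Q_\la^{m-1}\big)g_\la$, where $g_\la$ is the vector of expected stage payoffs, so I read the limit in the statement in the clock-truncated form $\Pi_t=\lim_{\la\to 0}\sum_{m=1}^{\varphi(\la,t)}\la(1-\la)^{m-1}Q_\la^{m-1}$. I would first record the elementary asymptotics $(1-\la)^{\varphi(\la,t)}\to 1-t$, hence $\la\,\varphi(\la,t)\to -\ln(1-t)$, so that for $t<1$ the effective horizon is of order $1/\la$ and the limit is governed by the characteristic roots of $Q_\la$ converging to $1$, the ones converging at rate $\la$ producing the genuinely nonlinear contributions in $t$.

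The first step I would carry out is algebraic. By the choice of $(x_\la,y_\la)$ the entries of $Q_\la$ are Puiseux series in $\la$ near $0$, hence so are its characteristic roots $\mu_1(\la),\dots,\mu_n(\la)$ together with the associated spectral projectors and nilpotents. Expanding $\sum_{m=1}^{\varphi(\la,t)}\la(1-\la)^{m-1}Q_\la^{m-1}=\la\sum_{m=0}^{\varphi(\la,t)-1}\big((1-\la)Q_\la\big)^{m}$ along the Jordan decomposition of $Q_\la$ reduces the matrix series to a finite combination of scalar geometric sums $\la\sum_{m=0}^{\varphi(\la,t)-1}\big((1-\la)\mu_r(\la)\big)^{m}$ and boundedly many of their derivatives in $\mu_r$ (coming from Jordan blocks), with matrix coefficients built from the projectors.

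Next I would analyse the scales of these scalar sums as $\la\to 0$. Setting $\nu_r(\la):=(1-\la)\mu_r(\la)$: if $|\nu_r(0)|<1$ the term is $O(\la)$ and disappears; if $\mu_r(0)=1$ with $\mu_r(\la)=1+c_r\la+o(\la)$ (the ``payoff-relevant'' roots, for which $\Re c_r\le 0$), then $\nu_r(\la)^{\varphi(\la,t)}\to (1-t)^{1-c_r}$ and $\la/(1-\nu_r(\la))\to 1/(1-c_r)$, so the sum tends to $\int_0^t e^{-c_r\ln(1-s)}\,ds$, the Jordan blocks producing the polynomial-in-$\ln(1-s)$ corrections; and if $\mu_r(0)=1$ with leading correction of order $\la^{\rho}$, $\rho\neq 1$, or $|\mu_r(0)|=1$ with $\mu_r(0)\neq 1$, the contribution is $0$ or linear in $t$, absorbable into a $c_r=0$ block. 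Collecting the surviving blocks I would read $\Phi\in\R^{n\times p}$ and $M\in\R^{p\times n}$ off the relevant projectors, let $A\in\R^{p\times p}$ carry the coefficients $c_r$ with their Jordan structure, and conclude $\Pi_t=\int_0^t\Phi e^{-\ln(1-s)A}M\,ds$ with $p$ independent of $t$; the substitution $u=-\ln(1-s)$, under which $e^{-\ln(1-s)A}=e^{uA}$ and $ds=e^{-u}\,du$, displays $e^{uA}$ as the limiting slow semigroup $\lim_{\la\to0}Q_\la^{\lfloor u/\la\rfloor}$ on the payoff-relevant part.

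The hard part --- and the reason the expansion above is not yet a proof --- is that when $Q_0:=\lim_{\la\to0}Q_\la$ is not the identity, the spectral projectors attached to the cluster of roots near $1$ need not stay bounded and the associated nilpotent parts need not be $O(\la)$, so the Jordan expansion does not converge termwise. The remedy, which is the technical core of \cite{JOB13} and where I expect to spend most of the effort, is a recursive aggregation of timescales: at the stage scale $Q_\la^{m}$ mixes each recurrent class of $Q_0$ to its invariant distribution and kills the transient states, collapsing the chain onto a smaller Markov chain on the set of those classes, governed by a rescaled parameter; iterating produces a finite hierarchy of nested chains whose classes are exactly the \emph{payoff-relevant cycles}, with $p$ their total number. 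The matrices $\Phi$ (membership of states in cycles, weighted by the within-cycle invariant measures), $A$ (generator of the induced slow dynamics on the cycles, assembled from the first-order coefficients at each level) and $M$ are produced by this recursion; it then remains to check that the hierarchy terminates after finitely many levels and that all estimates are uniform in $t\in[0,1]$, including near $t=1$, where $\varphi(\la,t)$ is no longer of order $1/\la$ and one must instead control the full resolvent $\la\big(I-(1-\la)Q_\la\big)^{-1}$ directly, as in the contour-integral arguments of \cite{OBZ18}.
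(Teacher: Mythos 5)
First, note that the paper offers no proof of this statement: Theorem~\ref{aer} is explicitly \emph{recalled} from \cite{JOB13,OBZ18}, so there is nothing in the text to compare your argument against line by line; the comparison can only be with the route taken in those references, which is indeed the multi-timescale analysis of the singularly perturbed chains $Q_\la$ that you gesture at in your last paragraph. Your reading of the statement in clock-truncated form $\Pi_t=\lim_{\la\to 0}\sum_{m=1}^{\varphi(\la,t)}\la(1-\la)^{m-1}Q_\la^{m-1}$ is the right correction of the paper's typo, and your scalar computations are sound: for an eigenvalue $\mu_r(\la)=1+c_r\la+o(\la)$ one does get $\la\sum_{m=0}^{\varphi(\la,t)-1}\nu_r(\la)^m\to\frac{1-(1-t)^{1-c_r}}{1-c_r}=\int_0^t(1-s)^{-c_r}ds$, and the other spectral regimes contribute $0$ or a term linear in $t$, consistent with the claimed formula.

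However, as you yourself concede, this is an outline and not a proof, and the gap is exactly where the whole difficulty of the theorem lies. When the Puiseux eigenvalues in the cluster near $1$ coalesce at $\la=0$ (the generic situation once $Q_0$ has several recurrent classes and transient states), the spectral projectors and nilpotent parts are themselves only Puiseux-valued and can diverge as $\la\to 0$, so the Jordan-block expansion cannot be passed to the limit termwise; controlling these cancellations is precisely the content of the recursive aggregation of timescales (collapsing recurrent classes of $Q_0$, rescaling, iterating, and identifying the surviving objects as the payoff-relevant cycles) that produces $p$, $\Phi$, $A$ and $M$ and shows $A$ is a bona fide generator. Your proposal names this mechanism and correctly attributes it to \cite{JOB13}, but does not construct the hierarchy, does not prove it terminates, does not show the limit is independent of the choice of Puiseux family, and does not establish uniformity in $t$ (including near $t=1$, where $\varphi(\la,t)$ is no longer of order $1/\la$ and one must control the resolvent $\la(I-(1-\la)Q_\la)^{-1}$ directly). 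So the heuristic skeleton matches the literature's approach, but the technical core of the statement --- the existence and structure of the limit despite unbounded projectors, and the identification of $p$ with the number of payoff-relevant cycles --- is deferred rather than proved.
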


\begin{corollaire} The map $t\mapsto \Pi_t$ is twice differentiable on $(0,1)$ and 
$$\frac{\partial^2}{\partial t^2} \Pi_t =\frac{1}{1-t}\Phi e^{-\ln(1-t) A} A M\qquad \forall t\in(0,1)\,.$$ 
\end{corollaire}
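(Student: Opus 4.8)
The plan is to differentiate the integral representation furnished by Theorem~\ref{aer} directly, using the fundamental theorem of calculus. First I would observe that the integrand $s\mapsto \Phi e^{-\ln(1-s)A}M$ is well defined and real-analytic on $[0,1)$: indeed $s\mapsto -\ln(1-s)$ is real-analytic there, and the matrix exponential is an entire matrix-valued function of its argument. In particular the integrand is continuous on $[0,1)$, so the fundamental theorem of calculus yields that $t\mapsto \Pi_t$ is $C^1$ on $(0,1)$ (in fact on $[0,1)$), with
$$\frac{\partial}{\partial t}\Pi_t = \Phi e^{-\ln(1-t)A}M\,.$$

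Next I would differentiate this expression once more. Writing $f(t):=-\ln(1-t)$, so that $f'(t)=\frac{1}{1-t}$ on $(0,1)$, the map $t\mapsto e^{f(t)A}$ is differentiable on $(0,1)$ with derivative $f'(t)\,A\,e^{f(t)A}$. Here one uses the standard fact that $A$ commutes with $e^{f(t)A}$, so the order of the factors is immaterial and $\frac{d}{dt}e^{f(t)A}=\frac{1}{1-t}\,e^{-\ln(1-t)A}A$. Multiplying on the left by $\Phi$ and on the right by $M$ gives
$$\frac{\partial^2}{\partial t^2}\Pi_t = \frac{1}{1-t}\,\Phi e^{-\ln(1-t)A}A M \qquad \forall t\in(0,1)\,,$$
which is the claimed formula.

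There is essentially no obstacle here beyond routine bookkeeping; the only points requiring (minor) care are that the singularity of $f'$ at $t=1$ lies outside the open interval $(0,1)$, so it causes no trouble, and that the termwise differentiation of the power series $e^{f(t)A}=\sum_{k\geq 0}\frac{f(t)^k}{k!}A^k$ is legitimate because this series and its formal derivative converge uniformly on compact subsets of $(0,1)$. Alternatively one may invoke the general formula for the derivative of $t\mapsto e^{B(t)}$ and note that it collapses to the simple expression above precisely because $B(t)=f(t)A$ satisfies $[B(t),B'(t)]=0$.
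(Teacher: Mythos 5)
Your proof is correct and is exactly the argument the paper leaves implicit: the corollary is stated as an immediate consequence of the integral representation $\Pi_t=\int_0^t \Phi e^{-\ln(1-s)A}M\,ds$, and differentiating once by the fundamental theorem of calculus and once more via the chain rule (using that $A$ commutes with $e^{-\ln(1-t)A}$) is precisely what is intended. No gaps; your remarks on the singularity at $t=1$ and on termwise differentiation are routine but accurate.
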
 
\begin{lemme}\label{AMg} Suppose that $AM g_0=0$. Then, $\lim_{\la\to 0}\ga_\la(x_\la,y_\la;t)=t v$.
%the constant payoff property for the discounted case holds as soon as 
\end{lemme}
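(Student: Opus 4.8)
The plan is to identify $\lim_{\la\to0}\ga_\la(x_\la,y_\la;t)$ with $\Pi_t g_0$ and then extract the conclusion from the integral formula of Theorem~\ref{aer} under the hypothesis $AMg_0=0$.

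First I would set up the reduction. Let $g_\la\in\R^n$ be the vector of expected stage payoffs induced by the stationary pair $(x_\la,y_\la)$; by the Puiseux assumption it converges, to $g_0$ say. By definition of the cumulated payoff,
$$\ga_\la(x_\la,y_\la;t)=W_\la(t)\,g_\la,\qquad\text{where}\quad W_\la(t):=\sum_{m=1}^{\varphi(\la,t)}\la(1-\la)^{m-1}Q_\la^{m-1}\,.$$
Each $W_\la(t)$ is a partial sum of the convex combination $\sum_{m\geq1}\la(1-\la)^{m-1}Q_\la^{m-1}$ of stochastic matrices, so its entries lie in $[0,1]$ and $\|W_\la(t)\|$ is bounded uniformly in $\la$ and $t$; hence $W_\la(t)(g_\la-g_0)\to0$. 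Combined with $W_\la(t)\to\Pi_t$ from Theorem~\ref{aer}, this yields $\lim_{\la\to0}\ga_\la(x_\la,y_\la;t)=\Pi_t g_0$. Evaluating at $t=1$ and recalling that $v=\lim_{\la\to0}v_\la=\lim_{\la\to0}W_\la(1)g_\la$, we also obtain $v=\Pi_1 g_0$.

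Second, I would use the hypothesis $AMg_0=0$. Put $h(s):=\Phi e^{-\ln(1-s)A}Mg_0\in\R^n$, so that $\Pi_t g_0=\int_0^t h(s)\,ds$ by Theorem~\ref{aer}. Exactly as in the computation proving the corollary following Theorem~\ref{aer}, $h$ is differentiable on $(0,1)$ with $h'(s)=\tfrac{1}{1-s}\,\Phi e^{-\ln(1-s)A}\,AMg_0$, which vanishes under the hypothesis. Thus $h$ is constant on $(0,1)$, equal to $h(0)=\Phi Mg_0$, so $\Pi_t g_0=t\,\Phi Mg_0$ for every $t\in[0,1]$. In particular $v=\Pi_1 g_0=\Phi Mg_0$, and therefore $\lim_{\la\to0}\ga_\la(x_\la,y_\la;t)=\Pi_t g_0=t v$, as claimed.

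The computations are short; the only point that needs care is the first step — the uniform bound on $W_\la(t)$ and the replacement of $g_\la$ by its limit $g_0$ — which ensures that the limit of $\ga_\la$ is genuinely the frozen quantity $\Pi_t g_0$ and carries no residual $\la$-dependence. The differentiation in the second step is harmless precisely because, under $AMg_0=0$, one has $h'\equiv0$, so no estimate near the singularity of $e^{-\ln(1-s)A}$ at $s=1$ is required and $h$ extends continuously to $[0,1]$. I therefore do not anticipate a substantial obstacle.
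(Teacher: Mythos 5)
Your proposal is correct and follows essentially the same route as the paper: you use the integral representation of $\Pi_t$ from Theorem~\ref{aer}, observe that the hypothesis $AMg_0=0$ kills the derivative so $\Pi_t g_0$ is linear in $t$, and pin down the constant via $\Pi_0=0$ and $\Pi_1 g_0=v$ from optimality. The only differences are cosmetic (one derivative of the integrand instead of two of $\Pi_t g_0$, and a spelled-out boundedness argument for replacing $g_\la$ by $g_0$).
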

Indeed, in this case one has $\frac{\partial^2}{\partial t^2} \Pi_t g_0=0$. Therefore, there exists vectors $\al,\be \in \R^n$ so that $\Pi_t g_0 =\al t+\be$. The boundedness of $g_0$ implies that 
$$\lim_{\la\to 0}\ga_\la(x_\la,y_\la;t)= \Pi_t g_0\qquad \forall t\in[0,1]\,.$$
In particular one has $\Pi_0 g_0=0$ because $\Pi_0=0$ by definition, and $\Pi_1 g_0=v$ by the optimality of $(x_\la,y_\la)$. Consequently, $\al=v$ and $\be=0$.

\subsection{Absorbing games}
%$\theta$ converges , then the  values of the weighted-average stochastic games converge, and the limit is equal to the value of the stochastic game. Furthermore, the strategy of Player~1 that consists in playing, at each stage $m  \geq  1$, an optimal stationary strategy  $x_{\la_m}  \in   \De (I)^K$ of the  $\la_m$-discounted game is asymptotically optimal for Player 1, where  $\la  m := \theta_m / \sum_{m' \geq m}  \ \theta_{m'}$ is the relative weight of the current stage relative to the weights of the remaining stages. and , see for instance  an
Throughout this section, $(K,I,J,g,q,k,\theta)$ is an \defn{absorbing game}, where all states except $k$ are absorbing. % if there exists $k\in K$ so that $q(\ell|\ell,i,j)=1$ for all $\ell\neq k$ and $(i,j)\in I\times J$. That is, all states except one are absorbing, in the sense that once they are reached the state does not change any more. 
For any initial state $\ell\neq k$, the stochastic game $(K,I,J,g,q,\ell,\theta)$ is equivalent to the matrix game $g^\ell \in \R^{I\times J}$ so that $v_\theta^\ell=v^\ell=\val \, g^\ell$. For this reason, we restrict our attention to the game with initial state $k$. 

%In the sequel, we consider all $\theta$-evaluated absorbing games $(K,I,J,g,q,k)$, where $k$ is the unique non-absorbing state, and $\theta=(\theta_m)$ is an arbitrary sequence of nonnegative weights. Sorin and Vigeral~\cite{SV19} 

%In order to state our main result we need to introduce a family of strategies. 

%\section{Main result}\label{mainOB15}
%, whiso that the boundedness of $g_0$ implies that 
%$$\lim_{\la\to 0}\ga_\la^k(x_\la,y_\la;t)= \Pi_t g_0\,.$$
%On the other, that $$\frac{\partial^2}{\partial t^2} \Pi_t g_0=0\,.$$ 
%Indeed, in this case. 
%\end{corollaire} 
%\begin{equation}\label{Pi_t_exp}\Pi_t= \Phi \left(\int_0^t e^{-\ln(1-s)A}ds\right) M=\int_0^t \Phi e^{-\ln(1-s)A} M ds\,.\end{equation}

\subsection{The constant payoff in discounted absorbing games}
%In this section, we assume that $(K,I,J,g,q, we prove the following result. 
\begin{proposition} Let $(x_\la,y_\la)$ be a pair of optimal strategies that admit an expansion in Puiseux series near 0. Then, for all $t\in [0,1]$, $\lim_{\la\to 0}\ga_\la^k(x_\la,y_\la;t)=t v^k$
\end{proposition}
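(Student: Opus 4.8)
The plan is to reduce the statement to the hypothesis of Lemma~\ref{AMg}, i.e. to show that $AMg_0=0$ in the special case of an absorbing game. First I would set up the relevant structure: in an absorbing game all states $\ell\neq k$ are absorbing, so the controlled Markov chain $Q_\la$ has at most one transient state $k$, and the limiting matrix $\Pi_t$ from Theorem~\ref{aer} has a very degenerate form. Concretely, the rows indexed by absorbing states $\ell$ are constant in $t$ (once absorbed, nothing moves), so the only nontrivial dynamics in $t\mapsto \Pi_t$ comes from the single row indexed by $k$. The number $p$ of payoff-relevant cycles is therefore at most $1$: either $k$ itself is "asymptotically absorbing" along $(x_\la,y_\la)$, in which case $p=0$ and $\Pi_t$ is constant in $t$, or there is exactly one payoff-relevant cycle, namely the self-loop at $k$, giving $p=1$.

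Next I would analyze the $p=1$ case in detail. Here $\Phi\in\R^{n\times 1}$, $A\in\R^{1\times 1}$ is a scalar, and $M\in\R^{1\times n}$, and the formula becomes $\Pi_t=\int_0^t \Phi e^{-\ln(1-s)A} M\,ds$, which is a rank-one update depending on $t$ only through the scalar $\int_0^t (1-s)^{-A}\,ds$. The key computation is to identify the vector $Mg_0\in\R$ and the scalar $A$. Here $g_0$ is the vector of payoffs-at-absorption (in state $\ell\neq k$ the payoff is frozen at $\val\,g^\ell$, and in state $k$ one uses the limiting value of the "flow" payoff). The crucial point is that the payoff-relevant cycle at $k$ is relevant precisely because the flow payoff at $k$ along the optimal strategies $(x_\la,y_\la)$, in the limit, equals $v^k$ — this is a standard consequence of the variational characterization of the limit value in absorbing games (the limit of the discounted values $v_\la^k$ equals the nonabsorbing payoff weighted against the absorption probabilities in a balanced way). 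So I expect $Mg_0$ to be proportional to the difference between the asymptotic flow payoff at $k$ and $v^k$, which vanishes, giving $AMg_0 = A\cdot(Mg_0) = 0$.

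The main obstacle, and where I would spend most of the care, is making the previous paragraph rigorous: precisely identifying $M$, $A$, $\Phi$ in terms of the Puiseux expansions of $(x_\la,y_\la)$ and of the absorption probabilities, and showing that the entry of $Mg_0$ corresponding to the cycle at $k$ is exactly $0$. This requires going back into the proof of Theorem~\ref{aer} from \cite{JOB13,OBZ18} to extract the meaning of the matrices in the absorbing case, or alternatively computing $\Pi_t g_0$ directly. A clean alternative route, which I would pursue in parallel, is to compute $\ga_\la^k(x_\la,y_\la;t)$ more explicitly: in an absorbing game, letting $\pi_\la$ denote the per-stage absorption probability from $k$ under $(x_\la,y_\la)$ and $h_\la$ the expected payoff conditional on absorbing (and $r_\la$ the nonabsorbing stage payoff at $k$), one has a geometric-type expression for $\ga_\la^k(\cdot;t)$ in terms of $(1-\pi_\la)$ and the fraction $\sum_{m\le\varphi}\la(1-\la)^{m-1}$; taking $\la\to 0$ with the scaling $\pi_\la/\la\to c\in[0,\infty]$ (which exists by the Puiseux assumption) yields $\lim_\la \ga_\la^k(x_\la,y_\la;t)$ as an explicit function of $t$ that one then checks equals $tv^k$, using $\lim_\la v_\la^k = v^k$ and the optimality identity at $t=1$. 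Either way, once $AMg_0=0$ (equivalently $\frac{\partial^2}{\partial t^2}\Pi_t g_0=0$) is established, Lemma~\ref{AMg} closes the argument, since it forces $\Pi_t g_0=\al t+\be$ with $\be=\Pi_0 g_0=0$ and $\al=\Pi_1 g_0=v^k$, hence $\lim_{\la\to 0}\ga_\la^k(x_\la,y_\la;t)=tv^k$ for all $t\in[0,1]$.
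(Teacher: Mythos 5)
There is a genuine gap at the heart of your plan. You correctly reduce the statement to Lemma~\ref{AMg} and you correctly identify the crux: in the nondegenerate case where the exit rate from $k$ is positive and of order $\la$ (i.e.\ $|A^{k,k}|\in(0,\infty)$), everything hinges on the limiting non-absorbing flow payoff at $k$ being equal to $v^k$ (equivalently $g_0^k=v^k$, which combined with the optimality identity $v^k=\frac{g_0^k+\sum_{\ell\neq k}A^{k,\ell}v^\ell}{1+|A^{k,k}|}$ gives $(Av)^k=0$ and hence $AMg_0=0$). But you do not prove this; you assert it as ``a standard consequence of the variational characterization of the limit value in absorbing games'' and defer the work to an unspecified inspection of the matrices in Theorem~\ref{aer}. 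This is precisely the nontrivial part, and it is where the paper spends essentially all of its effort: it partitions the actions by their Puiseux exponents into $I_0,I_*,I_1,I_+$ and $J_0,J_*,J_1,J_+$, decomposes the exit rate as $A=A_{10}+A_*+A_{01}$ according to which player carries the absorption, and then rules out $g_0^k>v^k$ (and symmetrically $g_0^k<v^k$) by explicit deviations: Player~2 inflating the exponents of his $J_1$ actions to $1-\ep$ so that absorption happens on a vanishing fraction of the game, and Player~1 suppressing all actions outside $I_0$ so that only $A_{01}$ drives absorption. These contradictions use optimality against off-path deviations, not just optimality along $(x_\la,y_\la)$, and nothing in your sketch substitutes for them.

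Your fallback route (explicit computation with $\pi_\la/\la\to c$, then ``check equals $tv^k$ using $\lim_\la v_\la^k=v^k$ and the optimality identity at $t=1$'') would not close the gap either: with flow payoff $r$ and conditional absorption payoff $h$, the limit of the cumulated payoff at time $t$ is $\int_0^t\bigl[(1-s)^c r+\bigl(1-(1-s)^c\bigr)h\bigr]ds$, and the $t=1$ identity only pins down the weighted average $\frac{r+ch}{1+c}=v^k$; it cannot force $r=v^k$ and $h=v^k$ separately, which is exactly what linearity in $t$ requires when $c\in(0,\infty)$. Separating $r$ from $h$ is again the deviation argument. (A minor additional discrepancy: in the paper's accounting each absorbing state is itself a payoff-relevant cycle with zero exit rate, so $p\in\{n-1,n\}$ rather than $p\le 1$; this is mostly bookkeeping, since the dynamics is concentrated at $k$, but your identification of $Mg_0$ as ``proportional to the flow payoff minus $v^k$'' is also not quite the right quantity --- the row of $AMg_0$ at $k$ is $A^{k,k}g_0^k+\sum_{\ell\neq k}A^{k,\ell}v^\ell$, whose vanishing needs both $g_0^k=v^k$ and the optimality identity.)
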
 
Let $p\in \N$, $A\in \R^{p\times p}$ and $M\in \R^{p\times n}$ be as in 
Theorem~\ref{aer}. By Lemma~\ref{AMg} it is enough to prove that $AM g_0=0$. 
First of all, note that any absorbing state $\ell\neq k$ is payoff relevant cycle, and that $A^{\ell,\ell}=0$ because $A^{\ell,\ell}$ is the the normalized exit rate from $\ell$.  
We distinguish two cases, depending on whether $k$ is a payoff-relevant cycle or not. % and, if it is, whether $A^{k,k}=0$ or not.

\noindent\textbf{Case 1:} $k$ is not a payoff-relevant cycle. In this case $p=n-1$ and $A=0$, so that $AMg_0=0$. 

\noindent\textbf{Case 2:} $k$ is a payoff-relevant cycle. In this case $p=n$ and $M=\Id$ so it is enough to prove $A g_0=0$. If $A^{k,k}=0$, then again $A=0$ so that $A g_0=0$. So suppose that $|A^{k,k}|>0$. This is the interesting case, as it corresponds to a situation where the absorption occurs after a positive, but random fraction of the game. We now prove this case. 

\begin{proof}
For all $(i,j)\in I\times J$, let $(c(i),e(i),c'(j), e'(j))\in \R^4_+$ so that $x_\la^k(i)=c(i)\la^{e(i)}+o(\la^{e(i)})$ and $y_\la^k(j)=c'(j)\la^{e'(j)}+o(\la^{e'(j)})$, and so that $c=0$ implies $e=0$. Introduce the sets of actions: 
$$\begin{cases} 
I_0=\{i\in I\,|\, e(i)=0\}\\
I_*=\{i\in I\,|\, e(i)\in (0,1)\}\\
I_1=\{i\in I\,|\, e(i)=1\}\\
I_+=\{i\in I\,|\, e(i)>1\}\,.
\end{cases} \quad 
\begin{cases} 
J_0=\{j\in J\,|\, e'(j)=0\}\\
J_*=\{j\in J\,|\, e'(j)\in (0,1)\}\\
J_1=\{j\in J\,|\, e'(j)=1\}\\
J_+=\{j\in J\,|\, e'(j)>1\}\,.\end{cases} $$
The sets $I_0,I_*,I_1,I_+$ partition $I$ while $J_0,J_*,J_1,J_+$ partition $J$. Because $k$ is a payoff-relevant cycle and $|A^{k,k}|>0$, the normalized exit rate from $k$ to any other state $\ell\neq k$ is given by $A^{k,\ell}$, which is the sum of the three normalized exit rates corresponding to $I_0\times J_1$, the pairs $(i,j)\in I_*\times J_*$ so that $e(i)+e'(j)=1$, and $I_1\times J_0$, denoted respectively by $A^{k,\ell}_{10}$, $A^{k,\ell}_{*}$, and $A^{k,\ell}_{01}$. Thus, $A=A_{10}+A_*+A_{01}$, where these are four $n\times n$ Markov matrices. The situation can be visualized as follows. 
\begin{center}
\begin{tikzpicture}[xscale=1, yscale=1]
%\node [left] at (-2,4) {Fig. 1};
%\node at (1.5,4) {$\yy$};
%\node at (-1.2,1.5) {$\xx$};
\node [above] at (3.5,3+1) {$J_+$};
\node [above] at (2.5,3+1) {$J_1$};
\node [above] at (1.5,3+1) {$J_*$};
\node [above] at (0.5,3+1) {$J_0$};
\node [left] at (0,-0.5+1) {$I_+$};
\node [left] at (0,0.5+1) {$I_1$};
\node [left] at (0, 1.5+1) {$I_*$};
\node [left] at (0, 2.5+1) {$I_0$};
\node [scale=1] at (0.5, 2.5+1) {$g_0^k$};
%\node  at (2.5, 2.5) {$g'^*$};
%\node  at (1.5,-0.5) {$\om_0$};
%\draw (0,0) to (0,3) to (3,3) to (3,0) to (0,0);
%\draw (1,0) to (1,3);
%\draw (2,0) to (2,3);
%\draw (0,1) to (3,1);
%\draw (0,2) to (3,2);
\draw (0,0) to (0,4) to (4,4) to (4,0) to (0,0);
\draw (1,0) to (1,4);
\draw (2,0) to (2,4);
\draw (3,0) to (3,4);
\draw (0,1) to (4,1);
\draw (0,2) to (4,2);
\draw (0,3) to (4,3);
\draw [fill=gray!30] (0,0+1) to (0,1+1) to (1,1+1) to (1,0+1) to (0,0+1);
\draw [fill=gray!30] (1,1+1) to (1,2+1) to (2,2+1) to (2,1+1) to (1,1+1);
\draw [fill=gray!30] (2,2+1) to (2,3+1) to (3,3+1) to (3,2+1) to (2,2+1);
\node at (0.5,0.5+1) {$A_{10}$}; 
\node at (1.5,1.5+1) {$A_*$}; 
\node at (2.5,2.5+1) {$A_{01}$}; 
%\draw [->,>=latex] (3.5,1.5) to (5,1.5);
%\node  at (5.5,1.5) {$g^$};
%\node  at (4.25,2) {$c\la$};
%\node at (1.5,-1.5) {$v\leq \limla \ga_\la(x_\la,y_\la)=\frac{1}{1+c}g(\om_0)+\frac{c}{1+c}g^*.$}; 
% \node [right] at (3.4,0.5) {$v\leq \limla \ga_\la(x_\la,y_\la),$}; 
% \node [right] at (3.75,0.75-1.25) {$=\frac{1}{1+c}g(\om_0)+\frac{c}{1+c}g^*.$}; 
\end{tikzpicture}
 \end{center}
The actions in $I_0\times J_0$ determine the payoff $g_0^k$, while the shaded areas correspond to the pairs of actions which determine the transitions from $k$ to the set of absorbing states. The actions in $I_+$ and $J_+$ are irrelevant as they do not affect neither$g_0^k$ nor $A^{k,\ell}$ for all $\ell\neq k$. 
\end{proof}

\noindent\textbf{Case 2a:} Suppose by contradiction that $g^k_0>v^k$ and $|A^{k,k}_{01}|>0$. In this case, Player~2 can deviate from $y_\la$ to a strategy $\widetilde{y}_\la$ which changes the probabilities of playing actions in $J_1$ to $c'(j)\la^{1-\ep}$ for a sufficiently small $\ep$ (say, smaller than all nonzero $e(i)$ and $e'(j)$). By doing so, the probability that the state $k$ is left before stage $t/\la$ goes to $1$ for any $t>0$. Consequently, 
\begin{equation}\label{PCA1}\lim_{\la\to 0}\ga_\la^k(x_\la,\widetilde{y}_\la)=\sum_{\ell\neq k}\frac{A^{k,\ell}v^\ell}{|A^{k,k}|}\,.\end{equation}
On the other hand, if Player~1 deviates from $x_\la$ to a strategy $\widetilde{x}_\la$ which plays actions outside $I_0$ to 0, then the transition from $k$ to the set of absorbing states depends only on $A_{01}$, and one has 
\begin{equation}\label{PCA2}\lim_{\la\to 0}\ga_\la^k(\widetilde{x}_\la,y_\la)=\frac{g_0+\sum_{\ell\neq k} A_{01}^{k,\ell} v^\ell}{1+|A^{k,k}|_{01}}\,.\end{equation}
Yet, the optimality of $(x_\la)$ and $(y_\la)$ implies that 
\begin{equation}\label{PCA3}\lim_{\la\to 0}\ga_\la^k(\widetilde{x}_\la,y_\la)\leq v \leq \lim_{\la\to 0}\ga_\la^k(x_\la,\widetilde{y}_\la)\,.\end{equation}
The relations \eqref{PCA1}, \eqref{PCA2} and  \eqref{PCA3} are not compatible with $g_0^k>v^k$, a contradiction.  
\noindent\textbf{Case 2b:} Suppose by contradiction that $g^k_0>v^k$ and $A^{k,k}_{01}=0$. 
In this case, for the strategy $(\widetilde{x}_\la)$ described in the previous case, one has 
$$\lim_{\la\to 0}\ga_\la^k(\widetilde{x}_\la,y_\la)=g_0^k\,.$$
This contradicts the optimality of $(y_\la)$.\\

We thus conclude that $g_0^k\leq v^k$. Similarly, reversing the roles of the players one obtains $g_0^k\geq v^k$ so that $g^k_0=v^k$. Now one the one hand, $g^\ell=v^\ell$ for all $\ell\neq k$ and on the other $A^{\ell,\ell'}=0$ for all $\ell\neq k$ and $\ell'$. Therefore, $Ag=Av=0$ as soon as $A^{k,k}v^k+\sum_{\ell\neq k}A^{k,\ell}v^\ell=0$, which follows from 
\begin{equation}\label{PCA1}v^k=\lim_{\la\to 0}\ga_\la^k(x_\la,y_\la)=\frac{g^k_0+\sum_{\ell\neq k} A^{k,\ell}v^\ell}{1+|A^{k,k}|}\,.\end{equation}

\subsection{The constant payoff for general absorbing games}
%\subsection{The cumulated payoffs at time $t$}
%The aim of this section is to prove the following result. 
\begin{proposition}\label{limittheta} Let $(\si^\theta,\tau^\theta)$ be the family of asymptotically $0$-optimal strategies defined in \eqref{defstrat}. Then, for all $t\in [0,1]$, $\lim_{\|\theta\|\to 0}\ga_\theta^k(\si^\theta,\tau^\theta;t)$ exists and does not depend on the vanishing evaluations.
\end{proposition}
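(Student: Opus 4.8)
The plan is to compute $\ga_\theta^k(\si^\theta,\tau^\theta;t)$ explicitly from the Markov structure of the absorbing game and to recognise the outcome as a Riemann sum for an integral that no longer involves $\theta$. Under $(\si^\theta,\tau^\theta)$ the state stays at $k$ until an absorption stage, after which it is frozen at some $\ell\neq k$ where the expected stage payoff is exactly $v^\ell$ (the strategies $(x_\la^\ell,y_\la^\ell)$ being optimal in the matrix game $g^\ell$). Write $\rho(\la):=\sum_{\ell\neq k}q(\ell\mid k,x_\la,y_\la)$ for the one-step absorption probability at $k$, $p^\ell(\la):=q(\ell\mid k,x_\la,y_\la)$, $g(\la):=g(k,x_\la,y_\la)$, and set $M:=\varphi(\theta,t)$, $r_m:=\sum_{m'\geq m}\theta_{m'}$ (so that $\la^\theta_m=\theta_m/r_m$, $r_m=\prod_{m'<m}(1-\la^\theta_{m'})$, $s_m:=1-r_m=\sum_{m'<m}\theta_{m'}$) and $P_m:=\prod_{m'<m}(1-\rho(\la^\theta_{m'}))$ for the probability of not having absorbed before stage $m$. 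Decomposing the cumulated payoff over the state reached at each stage gives the exact identity
$$\ga_\theta^k(\si^\theta,\tau^\theta;t)=\sum_{m=1}^{M}\theta_m\Big(P_m\,g(\la^\theta_m)+\sum_{\ell\neq k}v^\ell\sum_{m'<m}P_{m'}\,p^\ell(\la^\theta_{m'})\Big).$$

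Next I would feed in the asymptotic expansions. For $m\le M$ one has $r_m\ge 1-t$, hence $\la^\theta_m\le\|\theta\|/(1-t)\to 0$ uniformly; and the Puiseux structure of $(x_\la,y_\la)$ together with Theorem~\ref{aer} and the analysis of the previous subsection gives, for some $\eta>0$, $g(\la)=g_0^k+O(\la^\eta)$, $\rho(\la)=|A^{k,k}|\la+O(\la^{1+\eta})$ and $p^\ell(\la)=A^{k,\ell}\la+O(\la^{1+\eta})$. Then $\ln P_m=-\sum_{m'<m}\rho(\la^\theta_{m'})+O(\|\theta\|)=-|A^{k,k}|\sum_{m'<m}\la^\theta_{m'}+O(\|\theta\|^\eta)$, and since $\la^\theta_{m'}=\theta_{m'}/(1-s_{m'})$ with $\theta_{m'}$ the length of the subinterval $[s_{m'},s_{m'+1})$, the sum $\sum_{m'<m}\la^\theta_{m'}$ is a left Riemann sum of $\int_0^{s_m}\frac{ds}{1-s}=-\ln(1-s_m)$; hence $P_m=(1-s_m)^{|A^{k,k}|}+O(\|\theta\|^\eta)$ uniformly in $m\le M$. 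The same bookkeeping turns $\sum_{m'<m}P_{m'}p^\ell(\la^\theta_{m'})$ into $A^{k,\ell}\int_0^{s_m}(1-u)^{|A^{k,k}|-1}du+O(\|\theta\|^\eta)$. Substituting back, replacing $g(\la^\theta_m)$ by $g_0^k$ at a cost $O(\|\theta\|^\eta)$, and using once more that $\theta_m$ is the $m$-th mesh length, the whole right-hand side becomes a double Riemann sum, so that as $\|\theta\|\to 0$,
$$\ga_\theta^k(\si^\theta,\tau^\theta;t)\ \longrightarrow\ \int_0^t(1-s)^{|A^{k,k}|}g_0^k\,ds+\sum_{\ell\neq k}v^\ell\,A^{k,\ell}\int_0^t\!\!\int_0^s(1-u)^{|A^{k,k}|-1}du\,ds,$$
a quantity which does not depend on the chosen vanishing evaluations. (If $A^{k,k}=0$, in particular when $k$ is not a payoff-relevant cycle, the computation degenerates to $P_m\to 1$ and the limit is $t\,g_0^k$, again $\theta$-independent; and the case $t=1$ follows from the cases $t<1$ by letting $t\uparrow 1$ and using boundedness of the payoffs.)

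The genuine difficulty is not identifying the limit but making every approximation \emph{uniform} in $m\le M$, since $M=\varphi(\theta,t)$ may be arbitrarily large; the key leverage is the bound $\la^\theta_m\le\|\theta\|/(1-t)$ on $[0,t]$ with $t<1$, which makes $\sum_{m\le M}O((\la^\theta_m)^{1+\eta})=O(\|\theta\|^\eta)$ and $\sum_{m\le M}O((\la^\theta_m)^{2})=O(\|\theta\|)$ (controlling both the truncation $\ln(1-x)=-x+O(x^2)$ and the replacement of $\rho,p^\ell$ by their leading terms), while the Lipschitz character of $s\mapsto(1-s)^{-1}$ and $s\mapsto(1-s)^{|A^{k,k}|-1}$ on $[0,t]$ bounds all Riemann-sum errors by $O(\|\theta\|)$; together these yield a global error $O(\|\theta\|^\eta)$. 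One may phrase the argument more compactly by observing that the limit above is exactly $(\Pi_t g_0)^k$, with $\Pi_t$ as in Theorem~\ref{aer}: the computation shows $\ga_\theta^k(\si^\theta,\tau^\theta;t)=\big(\Pi_t^{(\theta)}g_0\big)^k+O(\|\theta\|^\eta)$ with $\Pi_t^{(\theta)}:=\sum_{m\le M}\theta_m\prod_{m'<m}Q_{\la^\theta_{m'}}$, and $\Pi_t^{(\theta)}\to\Pi_t$. Applied to the geometric family $\theta_m=\la(1-\la)^{m-1}$, for which $\la^\theta_m\equiv\la$ and $(\si^\theta,\tau^\theta)=(x_\la,y_\la)$, this common limit is also $\lim_{\la\to 0}\ga_\la^k(x_\la,y_\la;t)$, which is the link used to deduce Corollary~\ref{CPA}.
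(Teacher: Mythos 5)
Your overall strategy (decompose the expected payoff according to the absorption time, pass to a Riemann sum, identify a $\theta$-free integral) is essentially the paper's own route, which computes the limit occupation probabilities $P^{k,\ell}_t$ and then integrates via Lemma~\ref{convth}; in the case where the one-step absorption probability is of order exactly $\la$ your explicit formula indeed coincides with the paper's $\Pi_t g_0$. The problem is the step where you assert, ``for some $\eta>0$, $\rho(\la)=|A^{k,k}|\la+O(\la^{1+\eta})$ and $p^\ell(\la)=A^{k,\ell}\la+O(\la^{1+\eta})$,'' as a consequence of Theorem~\ref{aer} and the discounted analysis. This is not guaranteed for a general absorbing game: under Puiseux-optimal $(x_\la,y_\la)$ the total absorption probability from $k$ has the form $c_k\la^{e_k}+o(\la^{e_k})$ with an arbitrary exponent $e_k\geq 0$, and the case $e_k\in[0,1)$ (absorption much faster than $\la$, i.e.\ $k$ is \emph{not} a payoff-relevant cycle) genuinely occurs. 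Theorem~\ref{aer} only provides the generator $A$ on the payoff-relevant cycles; it does not say the one-step exit rate from $k$ is linear in $\la$. The paper's proof keeps the exponent $e_k$ general and invokes the three cases $e<1$, $e=1$, $e>1$ of Lemma~\ref{tecnico1} to get $p_t\in\{0,(1-t)^{c_k},1\}$, all of which are $\theta$-independent.

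Moreover, your parenthetical treatment of the remaining case is incorrect: when $k$ is not a payoff-relevant cycle the survival probability satisfies $P_{\varphi(t,\theta)}\to 0$ for every $t>0$ (absorption is essentially instantaneous on the macroscopic time scale), not $P_m\to 1$, and the limit payoff is $t\sum_{\ell\neq k}a_\ell v^\ell$ with $a_\ell$ the limiting conditional absorption distribution, not $t\,g_0^k$. So as written your argument covers only the ``critical'' situation $e_k\geq 1$; to complete it you would need to redo the expansion of $\ln P_m$ with the true exponent $e_k$ (your uniform bound $\la^\theta_m\leq\|\theta\|/(1-t)$ still gives $\sum_m(\la^\theta_m)^{e}\to\infty$, $\to -\ln(1-\frac{h}{1-t})$-type limits, or $\to 0$ according to $e<1$, $e=1$, $e>1$, exactly as in Lemma~\ref{tecnico1}) and check in each case that both $p_t$ and the conditional absorption distribution $a_\ell=\frac{c_\ell}{c_k}\ind_{\{e_\ell=e_k\}}$ are independent of the vanishing evaluation. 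The proposition itself is true in all cases; it is your proof that silently restricts to one of them.
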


We start by two technical lemmas. 

\begin{lemme}\label{tecnico1} Let $0\leq t<t+h<1$. Then, for all $e\geq 0$,
%$$(i) \  \lim_{\|\theta\|\to 0} \sum_{m=\varphi(t,\theta)}^{\varphi(t+h,\theta)} 
%\theta_m^e =
%\begin{cases}
%+\infty & \text{if }e<1\\
%h & \text{if } e=1\\
%0 & \text{if } e>1\\
%\end{cases}
%$$
$$\lim_{\|\theta\|\to 0} \sum_{m=\varphi(t,\theta)}^{\varphi(t+h,\theta)} 
(\la_m^\theta)^e =
\begin{cases}
+\infty &  \emph{ if } e<1\\
\ln\left (1-\frac{h}{1-t}\right) & \emph{ if } e=1\\
0 &  \emph{ if }  e>1\,. 
\end{cases}
%\displaystyle \int_t^{t+h}\frac{ds}{1-s}=\
$$

%$$ \lim_{\theta\to 0} \sum_{m=1}^{\varphi_\theta(t)} \frac{\theta_m}{1-t_\theta(m)}=-\ln(1-t)$$
\end{lemme}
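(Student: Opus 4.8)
The plan is to reduce the whole statement to a single telescoping identity for the local discount factors, after which the three cases become one‑line estimates. Write $S_m:=\sum_{m'\ge m}\theta_{m'}$, so that $1-\la_m^\theta=S_{m+1}/S_m$ and hence $\prod_{m=1}^{M}(1-\la_m^\theta)=S_{M+1}/S_1=1-\sum_{m'=1}^{M}\theta_{m'}$ for every $M$. Dividing the instances $M=\varphi(t+h,\theta)$ and $M=\varphi(t,\theta)-1$ of this identity yields
$$\prod_{m=\varphi(t,\theta)}^{\varphi(t+h,\theta)}\bigl(1-\la_m^\theta\bigr)=\frac{1-\sum_{m'=1}^{\varphi(t+h,\theta)}\theta_{m'}}{1-\sum_{m'=1}^{\varphi(t,\theta)-1}\theta_{m'}}\,.$$
By definition of the clock function $\varphi$ together with $\theta_{m'}\le\|\theta\|$, the two partial sums on the right differ from $t+h$ and from $t$, respectively, by at most $\|\theta\|$; letting $\|\theta\|\to 0$ (and using $t+h<1$, so the denominator stays bounded away from $0$) the block product tends to $(1-t-h)/(1-t)$, and therefore
$$\sum_{m=\varphi(t,\theta)}^{\varphi(t+h,\theta)}\bigl(-\ln(1-\la_m^\theta)\bigr)\;\longrightarrow\;\ln\frac{1-t}{1-t-h}\;=\;-\ln\!\Bigl(1-\tfrac{h}{1-t}\Bigr)\,.$$
I would also record, from $S_m\ge S_{\varphi(t+h,\theta)}\ge 1-(t+h)>0$ for every $m\le\varphi(t+h,\theta)$, the uniform smallness $\la_m^\theta\le\|\theta\|/(1-t-h)=:\ep(\theta)\to 0$ on the whole block.

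The three cases then follow by comparison with this logarithmic sum. For $e=1$, the elementary estimate $0\le -\ln(1-x)-x\le x^{2}$ (valid for $0\le x\le 1/2$) gives $0\le\sum_m\bigl(-\ln(1-\la_m^\theta)\bigr)-\sum_m\la_m^\theta\le\ep(\theta)\sum_m\la_m^\theta$ over the block; since the first sum is bounded and $\ep(\theta)\to 0$, both sums share the limit $\ln\frac{1-t}{1-t-h}$, which is the case $e=1$. For $e\ne 1$ I would write $(\la_m^\theta)^{e}=\la_m^\theta\cdot(\la_m^\theta)^{e-1}$ and use monotonicity of $x\mapsto x^{e-1}$ together with $0<\la_m^\theta\le\ep(\theta)$: if $e>1$ then $\sum_m(\la_m^\theta)^{e}\le\ep(\theta)^{e-1}\sum_m\la_m^\theta\to 0$; if $e<1$ then $\sum_m(\la_m^\theta)^{e}\ge\ep(\theta)^{e-1}\sum_m\la_m^\theta$, which tends to $+\infty$ because $\ep(\theta)^{e-1}\to+\infty$ while, by the case $e=1$, $\sum_m\la_m^\theta$ is eventually bounded below by the positive constant $\tfrac12\ln\frac{1-t}{1-t-h}$. (The case $e=0$ is even more direct: the block carries at least $h$ units of $\theta$‑mass, hence contains at least $h/\|\theta\|$ stages.)

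The computation is essentially routine; the only real content is the telescoping identity, which converts a sum of $\la_m^\theta$'s into a ratio of tails of $\theta$ that the clock function pins down up to an error $O(\|\theta\|)$. The one point that needs a little care is the case $e<1$: there it is crucial that $h>0$, so that the limiting constant $\ln\frac{1-t}{1-t-h}$ is \emph{strictly} positive and forces divergence rather than mere boundedness away from $0$; the standing hypothesis $t+h<1$ is exactly what keeps both this constant and the bound $\ep(\theta)$ finite.
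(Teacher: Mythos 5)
Your proof is correct, and for the central case $e=1$ it takes a genuinely different route from the paper. The paper sandwiches each term, $\bigl(\theta_{m+1}/(1-t)\bigr)^e\le(\la_m^\theta)^e\le\bigl(\theta_{m+1}/(1-t-h)\bigr)^e$, sums over the block, and uses $\sum_m\theta_{m+1}\to h$; for $e=1$ this only yields the two-sided bound $\frac{h}{1-t}\le\lim\sum_m\la_m^\theta\le\frac{h}{1-t-h}$, and the exact value is then inferred through a small-$h$ expansion ($\frac{h}{1-t}+o(h)$), which suffices for how the lemma is used later but is not a full identification of the limit at fixed $h$. Your telescoping identity $\prod_{m\le M}(1-\la_m^\theta)=1-\sum_{m'\le M}\theta_{m'}$ computes the block product exactly and hence gives the limit in closed form for every fixed $h$, the elementary bound $0\le-\ln(1-x)-x\le x^2$ (with the uniform smallness $\la_m^\theta\le\|\theta\|/(1-t-h)$) transferring it from the log-product to the sum; this is sharper and cleaner. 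Note that your value $\ln\frac{1-t}{1-t-h}=-\ln\bigl(1-\frac{h}{1-t}\bigr)$ differs in sign from the lemma's display: since the sum has nonnegative terms, the displayed $\ln\bigl(1-\frac{h}{1-t}\bigr)$ must be a sign typo, and your positive value is the one consistent both with the paper's own bounds and with the subsequent use of the lemma (e.g.\ $\exp\bigl(-c_k\sum_m(\la_m^\theta)^{e_k}\bigr)\to(1-t)^{c_k}$ when $e_k=1$ in Proposition~\ref{limittheta}). The cases $e\neq 1$ rest on essentially the same comparison trick in both proofs (the paper's $\|\theta\|^{e-1}$ versus your $\ep(\theta)^{e-1}$), so there the two arguments coincide in substance; your separate counting argument for $e=0$ is a harmless addition.
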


\begin{proof} For each $m\geq 1$ set $t^\theta_m:=\sum_{m'=1}^{m-1}\theta_m$ so that 
$\la^\theta_m= \frac{\theta_{m+1}}{1-t^\theta_m}$. 
Then, for any $m$ between $\varphi(t,\theta)$ and $\varphi(t+h,\theta)$ one has $t\leq t^\theta_m\leq t+h$, so that %and $\la^\theta_m=\displaystyle \frac{\theta_{m+1}}{1-t^\theta_m}$. Hence, setting $C:=\left(\frac{1}{1-t}\right)^e$ and $D:=\left(\frac{1}{1-t-h}\right)^e$ one has:
\begin{equation}\label{lae}
%\left(\frac{\theta_{m+1}}{1-t}\right)^e
\left(\frac{\theta_{m+1}}{1-t}\right)^e\ \leq \left(\frac{\theta_{m+1}}{1-t^\theta_m}\right)^e \leq \left(\frac{\theta_{m+1}}{1-t-h}\right)^e\end{equation}
%\left(\frac{\theta_{m+1}}{1-t-h}\right)^e\end{equation}
We distinguish three cases, depending on whether $e=1$, $e>1$ or $e<1$.\\
\noindent \textbf{Case $e=1$.} It is enough to note that %On the one hand,   %Let us start by the critical case, i.e. $e=1$.
%Let $$R(t,h):= \lim_{\|\theta\|\to 0} \sum_{m=\varphi(t,\theta)}^{\varphi(t+h,\theta)}\la^\theta_m, \quad \text{and} \quad R(t,0):=0.$$  
$\lim_{\|\theta\|\to 0}\sum_{m=\varphi(t,\theta)}^{\varphi(t+h,\theta)}\theta_{m+1}= h$, add the inequalities of \eqref{lae} for all $\varphi(\theta,t)\leq m\leq \varphi(\theta,t+h)$ and then take $\|\theta\|$ to $0$, to obtain the desired result, i.e. 
$$\frac{h}{1-t} \leq \lim_{\|\theta\|\to 0} \sum_{m=\varphi(t,\theta)}^{\varphi(t+h,\theta)}\la^\theta_m \leq \displaystyle \frac{h}{1-t-h}$$
It follows that, as $h$ tends to $0$, $\lim_{\|\theta\|\to 0} \sum_{m=\varphi(t,\theta)}^{\varphi(t+h,\theta)}\la^\theta_m=\displaystyle\frac{h}{1-t}+o(h)$, and hence the result.\\
% follows since $\lim_{h\to 0} \frac{1}{h}(R(t,h)-R(t,0))=\frac{1}{1-t}$. \\ % by considering $R(t,h) \\
\noindent \textbf{Case $e<1$.} In this case $\theta^e_{m+1}\geq \|\theta\| ^{e-1}\theta_{m+1}$. %Let $C:=\left(\frac{1}{1-t}\right)^e>0$.  
From \eqref{lae} one derives
\begin{eqnarray}\sum_{m=\varphi(t,\theta)}^{\varphi(t+h,\theta)} (\la^\theta_m)^e&\geq &  \|\theta\| ^{e-1}\sum_{m=\varphi(t,\theta)}^{\varphi(t+h,\theta)}\theta_{m+1}\,.
% \sum_{m=\varphi(t,\theta)}^{\varphi(t+h,\theta)} \theta_{m+1}^e\\ \label{lae2}
%&=& C \sum_{m=\varphi(t,\theta)}^{\varphi(t+h,\theta)} \theta_{m+1}\theta^{e-1}_{m+1}
%&\geq & C \ep^{e-1} \sum_{m=\varphi(t,\theta)}^{\varphi(t+h,\theta)} \theta_{m+1}\\
\end{eqnarray}% by taking $\|\theta\|$ to $0$ since
The result follows, since $\lim_{\|\theta\|\to 0} \|\theta\| ^{e-1}=+\infty$ and 
$\lim_{\|\theta\|\to 0} \sum_{m=\varphi(t,\theta)}^{\varphi(t+h,\theta)}\theta_{m+1}=h$.\\
%For any $\ep>0$, let $\theta$ be small enough so that $\sup_m\theta_m\leq \ep$ and $\sum_{m=\varphi(t,\theta)}^{\varphi(t+h,\theta)}\theta_{m+1}\geq \frac{h}{2}>0$. It follows that $\theta_m^{e-1}\geq  \ep^{e-1}$ for all $m\geq 1$, so that \eqref{lae2} yields:
\noindent \textbf{Case $e>1$.} In this case $\theta^e_{m+1}\leq \|\theta\| ^{e-1}\theta_{m+1}$, and the result follows from \eqref{lae} like the previous case, since $\lim_{\|\theta\|\to 0} \|\theta\| ^{e-1}=0$ in this case. \end{proof}

\begin{lemme}\label{convth} Let $(a^\theta_m)_{m\geq 1}$ be a sequence in $[0,1]$ and let
$f(\,\cdot\,,\theta):[0,1]\to \NN$ be a nondecreasing mao so that $\lim_{\|\theta\|\to 0} \sum_{m=1}^{f(t,\theta)}\theta_m=t$. 
 Suppose that $\lim_{\|\theta\|\to 0}  | a^\theta_{f(t,\theta)}-a(t)|=0$ for all $t\in [0,1]$ and some measurable function $a:[0,1]\to [0,1]$. Then, 
% for almost all $t\in [0,1]$, 
%$\lim_{\|\theta\|\to 0} a_{\varphi(t,\theta)}^\theta=a(t)$, for some 
%continuous function $a:[0,1]\to [0,1]$, and Then
%$f$ such that $\lim_{\|\theta\|\to 0} t^\theta_{f(t,\theta)}=t$.  Then
$$\lim_{\|\theta\|\to 0} \sum_{m=1}^{f(t,\theta)} \theta_m a^\theta_m=\int_0^t a(s)ds\,.$$
\end{lemme}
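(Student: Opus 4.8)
The plan is to prove Lemma~\ref{convth} by a Riemann-sum argument, exploiting the two hypotheses separately: the hypothesis $\lim_{\|\theta\|\to0}\sum_{m=1}^{f(t,\theta)}\theta_m=t$ controls the ``mass'' carried by the weights up to index $f(t,\theta)$, and the pointwise convergence $a^\theta_{f(t,\theta)}\to a(t)$ says that the integrand evaluated along the clock stabilizes. First I would fix $t\in[0,1]$ and, given $\ep>0$, choose a partition $0=s_0<s_1<\dots<s_N=t$ of $[0,t]$ fine enough that the step function $\sum_{\ell} a(s_{\ell-1})\ind_{[s_{\ell-1},s_\ell)}$ approximates $a$ in $L^1[0,t]$ up to $\ep$; since $a$ is measurable and bounded in $[0,1]$, such a partition exists (one may even invoke Lusin's theorem to reduce to the continuous case, or simply approximate $a$ by a step function directly in $L^1$). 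The key point is that this partition is chosen once and for all, independently of $\theta$.

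Next I would break the sum $\sum_{m=1}^{f(t,\theta)}\theta_m a^\theta_m$ along the indices $f(s_{\ell-1},\theta)$ dictated by the clock, writing it as $\sum_{\ell=1}^N \sum_{m=f(s_{\ell-1},\theta)+1}^{f(s_\ell,\theta)} \theta_m a^\theta_m$ (up to negligible boundary terms of size $O(\|\theta\|)$). On each block, I would bound $a^\theta_m$ between its minimum and maximum over the block; but rather than uniform control over the block (which we do not have), the cleaner route is: since each block $[s_{\ell-1},s_\ell)$ has small length, and $f(\cdot,\theta)$ is nondecreasing with $\sum_{m=1}^{f(s,\theta)}\theta_m\to s$, the mass $\sum_{m=f(s_{\ell-1},\theta)+1}^{f(s_\ell,\theta)}\theta_m$ tends to $s_\ell-s_{\ell-1}$. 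One then needs to show $\sum_{m=f(s_{\ell-1},\theta)+1}^{f(s_\ell,\theta)}\theta_m a^\theta_m$ is close to $(s_\ell-s_{\ell-1})\,a(s_{\ell-1})$; this follows because, within a block, the values $a^\theta_m$ are sandwiched between $a^\theta_{f(s_{\ell-1},\theta)}$-type and $a^\theta_{f(s_\ell,\theta)}$-type endpoint values only if $a^\theta$ is itself monotone-ish, which it is not assumed to be. The correct fix is to refine the partition: since we may take the mesh as small as we like, and the only uniform hypothesis is pointwise convergence at the partition points, I would instead observe that for a sufficiently fine partition the $L^1$-oscillation of $a$ is small, apply pointwise convergence $a^\theta_{f(s_{\ell-1},\theta)}\to a(s_{\ell-1})$ at each of the finitely many partition points, and absorb the intra-block discrepancy into the $\ep$ using that the total mass of the blocks is bounded by $1$ and that the sub-partition can be taken so fine that each block's $a$-values lie within $\ep$ of $a(s_{\ell-1})$ on a set of full measure — which after passing through $f$ becomes: the contribution of ``bad'' indices has vanishing $\theta$-mass.

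Concretely, the estimate I would assemble is
\[
\Bigl|\sum_{m=1}^{f(t,\theta)}\theta_m a^\theta_m-\int_0^t a(s)\,ds\Bigr|
\le \sum_{\ell=1}^N \Bigl|\sum_{m=f(s_{\ell-1},\theta)+1}^{f(s_\ell,\theta)}\theta_m\bigl(a^\theta_m-a(s_{\ell-1})\bigr)\Bigr|
+\sum_{\ell=1}^N|a(s_{\ell-1})|\cdot\Bigl|\sum_{m=f(s_{\ell-1},\theta)+1}^{f(s_\ell,\theta)}\theta_m-(s_\ell-s_{\ell-1})\Bigr|
+\Bigl|\int_0^t a-\sum_\ell a(s_{\ell-1})(s_\ell-s_{\ell-1})\Bigr|,
\]
where the third term is $\le\ep$ by choice of partition, the second term $\to0$ as $\|\theta\|\to0$ by the hypothesis on $f$ (finitely many terms, each vanishing), and the first term is the crux: I would control it by choosing the partition so fine that the pointwise-convergence hypothesis at the endpoints $s_{\ell-1}$, combined with the fact that $a^\theta_m$ for $m$ in the $\ell$-th block differs from $a(s_\ell)$ and $a(s_{\ell-1})$ only for indices of small $\theta$-mass, forces this sum below $\ep$ for $\|\theta\|$ small. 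The main obstacle is precisely this first term: the hypotheses give us control of $a^\theta$ \emph{only} along the clock values $f(t,\theta)$, not for generic indices $m$, so one must be careful that the indices $m$ lying strictly between $f(s_{\ell-1},\theta)$ and $f(s_\ell,\theta)$ are themselves of the form $f(s,\theta)$ for some $s\in[s_{\ell-1},s_\ell]$ (which is true since $f(\cdot,\theta)$ is nondecreasing and hits every integer up to $f(t,\theta)$ as $s$ ranges over $[0,t]$, possibly with jumps — but jumps correspond to a single $\theta_m$ of size up to $\|\theta\|$, hence negligible). Once this identification is made, $a^\theta_m=a^\theta_{f(s,\theta)}$ is close to $a(s)$ which is close to $a(s_{\ell-1})$ up to the modulus of $L^1$-continuity, and the argument closes. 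Letting $\ep\to0$ at the end gives the claim.
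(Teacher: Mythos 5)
The paper itself omits the proof of this lemma, so there is nothing to compare your argument to; judged on its own, your write-up has a genuine gap, and it sits exactly at the step you yourself call the crux. (i) The assertion that every index $m$ strictly between $f(s_{\ell-1},\theta)$ and $f(s_\ell,\theta)$ is of the form $f(s,\theta)$ ``since $f(\cdot,\theta)$ is nondecreasing and hits every integer'' is false: a nondecreasing integer-valued map can skip indices at every jump, and the skipped indices are not negligible in aggregate --- each jump skips a small mass, but there are of order $1/\|\theta\|$ jumps. In fact the lemma as literally stated fails for a general nondecreasing $f$: take $\theta$ uniform on $\{1,\dots,N\}$, $f(t,\theta)=2\lceil\varphi(\theta,t)/2\rceil$, $a^\theta_m=\ind_{\{m \text{ odd}\}}$ and $a\equiv 0$; all hypotheses hold, yet $\sum_{m\le f(t,\theta)}\theta_m a^\theta_m\approx t/2$. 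So any correct proof must use the specific clock $f=\varphi(\theta,\cdot)$ (for which every index carrying positive mass is attained), and your argument never isolates this. (ii) Even for indices of the form $f(s,\theta)$, you invoke the hypothesis $a^\theta_{f(s,\theta)}\to a(s)$ at points $s$ that depend on $\theta$, and uniformly over a continuum of such points inside a block; the hypothesis is only pointwise in $s$, so for a fixed small $\|\theta\|$ it gives no bound at these moving points. (iii) ``$a(s)$ is close to $a(s_{\ell-1})$'' is unavailable for a merely measurable $a$: small $L^1$-oscillation on a block does not give pointwise closeness, and your transfer of a Lebesgue-small exceptional set of times into a set of indices of small $\theta$-mass is asserted, not proved (the bad set where $|a^\theta_{f(s,\theta)}-a(s)|$ is large depends on $\theta$). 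The second and third terms of your displayed estimate are fine; the first is not controlled by what precedes it.

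There is a short route that avoids all of this and clarifies why the statement should be read with $f=\varphi(\theta,\cdot)$. Set $g_\theta(s):=a^\theta_{\varphi(\theta,s)}$, a $[0,1]$-valued step function of $s$; the hypothesis says precisely that $g_\theta\to a$ pointwise on $[0,1]$ as $\|\theta\|\to 0$. Since the level set $\{s:\varphi(\theta,s)=m\}$ has length $\theta_m$, one has $\sum_{m=1}^{\varphi(\theta,t)}\theta_m a^\theta_m=\int_0^{F_\theta(t)}g_\theta(s)\,ds$ with $F_\theta(t)=\sum_{m\le \varphi(\theta,t)}\theta_m\in[t,t+\|\theta\|)$, hence the sum equals $\int_0^t g_\theta(s)\,ds+O(\|\theta\|)$. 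Convergence as $\|\theta\|\to 0$ is equivalent to convergence along every sequence $(\theta^{(n)})$ with $\|\theta^{(n)}\\|\to 0$, and along any such sequence dominated convergence (with dominating constant $1$) gives $\int_0^t g_{\theta^{(n)}}\to\int_0^t a$. This needs no partition, no continuity of $a$, and makes explicit use of exactly the structural fact your argument glossed over, namely that the clock attains every index carrying mass.
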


The proof of this result is omitted. 

%$$\sum_{m\geq 1}\theta_m g(k_m,i_m,j_m)$$
%in expectation, conditional on $k_1=k$. 

\paragraph{Proof of Proposition~\ref{limittheta}.}
For each $m\geq 1$, let $Q_m^\theta\in \R^{n\times n}$ be the transition matrix induced by 
$(\si^\theta,\tau^\theta)$ at stage $m$. By the definition of these strategies, there exist coefficients $c_\ell \geq 0$ and exponents $e_\ell\geq 0$ for all $1\leq \ell\leq n$ so that
 % $ $ and $efor each $ $ and $y$ at stage $m$. By regularity, there exists $(c_\ell ,e_\ell )_s$ and $(c,e)$ such
 %that\footnote{Precisely, $e=\min\{e_\ell \,|\, c_\ell >0\}$ and $c=\sum_{\ell\neq k}c_\ell \ind_{\{e_\ell =e\}}$.} 
% for all $\ell\neq k$:
$$(Q_m^\theta)^{k,\ell}=\begin{cases} c_\ell (\la^\theta_m)^{e_\ell}+o ((\la^\theta_m)^{e_\ell })& \text{ if } \ell \neq k\\ 
1- c_k (\la^\theta_m)^{e_k }+o((\la^\theta_m)^{e_k })& \text{ if } \ell=k\,.\end{cases}$$
Moreover one can assume without loss of generality that $e_\ell=0$ whenever $c_\ell=0$ for all $\ell\neq k$, so that $e_k\geq e_\ell$ for all $\ell\neq k$. 
%\  \text{ and } \  \sum_{\ell\neq k}(Q_m^\theta)^{k,\ell}+o ((\la^\theta_m)^{e })\,.$$
%
%Let $Q_m^\theta(\ell):=Q_m^\theta(k,\ell)\sim c_\ell \la_m ^{e_\ell } $ for any $s\neq \ell$, where $\la_m=W(m,\theta)$. 
%Let $(c,e)$ be such that  $\sum_{\ell\neq k}Q_m^\theta(\ell)\sim c\la_m ^e$. 
 Let $p(t,\theta)$ be the probability of being at $k$ after $\varphi(t,\theta)$ stages. Then
$$ p(t,\theta)= \prod_{m=1}^{\varphi(t,\theta)}(Q_m^\theta)^{k,k}=
\prod_{m=1}^{\varphi(t,\theta)}\left(1-c_k (\la^\theta_m)^{e_k}+ o((\la^\theta_m)^{e_k })\right)\,.$$ % \sim 
%\exp\left(-c \sum_{m=1}^{\varphi(t,\theta)} (\la^\theta_m)^e \right).$$ 
Taking $\|\theta\|$ to $0$ and setting $p(t):=\lim_{\|\theta\|\to 0} p(t,\theta)$ one has, by Lemma \ref{tecnico1},
$$p_t=\lim_{\|\theta\|\to 0} \exp\left(-c_k \sum_{m=1}^{\varphi(t,\theta)} (\la^\theta_m)^{e_k} \right)= \begin{cases}
0 & \text{ if } e_k<1\\
(1-t)^c & \text{ if }  e_k=1\\
1 & \text{ if } e_k>1\,.
\end{cases}  $$
%$0$, $(1-t)^c$ or $1$ depending on whether $e<1$, $e=1$ or $e>1$. 
In other words, $p_t$ is well-defined and does not depend on the family of vanishing evaluations. Similarly, conditional on reaching an absorbing at stage $m+1$, the probability that $k_{m+1}=\ell$ is given by 
$$\PP_{\si^\theta,\tau^\theta}^k\left(k_{m+1}=\ell \, | \, k_{m}=k\right)=
\frac{(Q_m^\theta)^{k,\ell}}{\sum_{\ell'\neq k} (Q_m^\theta)^{k,\ell'}}= \frac{c_\ell (\la^\theta_m)^{e_\ell }+ o ((\la^\theta_m)^{e_\ell })}{c_k(\la^\theta_m)^{e_k}+o((\la^\theta_m)^{e_k})}\,.$$
Thus, the limits as $\|\theta\|$ goes to $0$ exist for all $\ell\neq k$, and do not depend on the family of vanishing evaluations. Let $a_\ell:=\frac{c_\ell }{c_k}\ind_{\{e_\ell=e_k\}}$ denote this limit. 
%which converges to some $p^*(\ell)\in [0,1]$ as $\theta$ goes to $0$. Again, the convergence does not depend on the relative weights $(\la_m)_m$. 
Together, these two results imply that for all $t\in[0,1]$ the following limit exists and does not depend on the family of vanishing evaluations: 
$$P_t^{k,\ell}:=\lim_{\|\theta\|\to 0} \PP_{\si^\theta,\tau^\theta}^k\left(k_{\varphi(\theta,t)}=\ell\right)=
%We have thus obtained the (asymptotic) probability distribution of the states at time $t\in [0,1]$, i.e.=
p_t \ind_{\{\ell=k\}} + (1-p_t)a_\ell\ind_{\{\ell\neq k\}} \qquad \forall 1\leq \ell \leq n\,.$$
Hence, the same is true for the cumulated times $\Pi_t^{k,\ell}:=\int_0^t P_s^{k,\ell} ds$. 
Finally, $\lim_{\|\theta\|\to 0} g(\si^\theta,\tau^\theta)=g_0$ exists and does not depend on the family of vanishing evaluations either. The result follows then from Lemma \ref{convth}, since it implies in particular 
that $\Pi_t= \lim_{\|\theta\|\to 0} \sum_{m= 1}^{\varphi(\theta,t)} \theta_m \prod_{m'=1}^{m}  Q^\theta_{m'}$, so that 
$$\lim_{\la\to 0}\ga_\theta(x_\la,y_\la;t)= \Pi_t g_0\,.$$
%\sum_{\ell=1}^n  \Pi_t^{k,\ell}g_0^\ell\,.$$
%, and the limit does not depend on $\theta$. It follows that 
%$G(\xx[\theta],y)= \int_0^1 P_t g_0 dt$ is independent of the evaluations so that, in particular, 
%$$G(\xx[\theta],y)=G(\xx[\la],y)\geq v.$$
\hfill $\blacksquare$

\subsection{Critical stochastic games}
In this section we extend the constant-payoff property to any critical stochastic game, a class that includes stochastic games satisfying $(H1)$ and $(H2)$.% absorbing games and stochastic games with two states. stochastic games so that for each state, either it leads to a set of absorbing states or to a singleton, so in particular the well-known class of absorbing games and stochastic games with only two states. 

A \defn{critical stochastic game} is one with the the following property.  For each $\ep>0$ there exists a family of strategies $(x_\la^\ep,y_\la^\ep)$ in the discounted stochastic game which satisfies
\begin{itemize}
\item $(x_\la^\ep)$ is asymptotically $\ep$-optimal for Player~1 and $(y_\la^\ep)$ is asymptotically $\ep$-optimal strategy for Player~2, that is for any $(\si,\tau)\in \Sigma \times \mathcal{T}$, 
$$\liminf_{\la\to 0}\ga_\la^k(x_\la^\ep,\tau)\geq v^k-\ep\quad \text{ and }\quad \limsup_{\la\to 0}\ga_\la^k(\si, y_\la^\ep)\leq v^k+\ep\,.$$
\item The stochastic matrix $(Q^\ep_\la)$ on the state space induced by $(x_\la^\ep,y_\ep^\ep)$ is 
\defn{critical}, that is, there exists a transition matrix $M\in \R^{n\times n}$ (of a continuous-time Markov chain, so that for all $1\leq \ell \leq n$, $M^{\ell,\ell'}\geq 0$ for all $\ell'\neq \ell$, and $M^{\ell,\ell}+\sum_{\ell'\neq \ell} M^{\ell,\ell'}=0$) so that 
$$Q^\ep_\la=\Id + M \la+o(\la)\,.$$
\end{itemize}

%A matrix $M\in \R^{n\times n}$ is a \defn{continuous Markov chain} if for all $1\leq k\leq n$,
%$$M^{k,\ell}\geq 0, \ \forall \ell\neq k, \quad \text{ and }\quad M^{k,k}+\sum_{\ell\neq k} M^{k,\ell}=0\,.$$ 
%\begin{definition}
%A regular family of stochastic matrices $(Q_\la)$ is \defn{critical} if there exists a Markov chain $A$ so that $Q_\la=\Id + A \la+o(\la)$ for sufficiently small $\la$. 
%\end{definition}%of Theorem~\ref{CPA2} i
\begin{proposition} Every stochastic games satisfying the assumptions $(H1)$ and $(H2)$ is critical. %propositionThe constant-payoff property holds for any critical stochastic game.%Let $(K,I,J,g,q)$ be a stochastic game. Suppose that there exists a couple of optimal stationary strategies $(x_\la,y_\la)$ that admit Puiseux expansions so that $(Q_\la)$ is critical. Then, this game has the constant-payoff property. 
\end{proposition}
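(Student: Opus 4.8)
The plan is to start from a family of optimal stationary strategies $(x_\la,y_\la)$ whose action weights admit Puiseux expansions near $0$ (such a family exists by Bewley--Kohlberg), and to compare it with the notion of criticality. Let $Q_\la\in\R^{n\times n}$ be the transition matrix on $K$ induced by $(x_\la,y_\la)$. Each off-diagonal entry has an expansion $Q_\la^{\ell,\ell'}=c_{\ell,\ell'}\la^{e_{\ell,\ell'}}+o(\la^{e_{\ell,\ell'}})$ with $c_{\ell,\ell'}\geq 0$ and a rational exponent $e_{\ell,\ell'}\geq 0$, the exponent of the transition $\ell\to\ell'$ being governed by the exponents of the action weights realising it. The pair $(x_\la,y_\la)$ fails to be critical precisely when some off-diagonal exponent is $<1$: either an instantaneous transition ($e_{\ell,\ell'}=0$, so $Q_0\neq\Id$) or a genuinely sub-linear one ($e_{\ell,\ell'}\in(0,1)$). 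The whole point of $(H1)$ and $(H2)$ is that, up to an $\ep$-perturbation of the strategies, neither can occur.

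\textbf{Step 1: the combinatorial content of $(H1)$ and $(H2)$.} First I would classify the possible off-diagonal exponents. Grouping states according to the recurrence structure of $Q_0$, the instantaneous transitions ($e=0$) organise $K$ into ``fast classes'', and every slower transition ($e>0$) either leaves such a class or links two transient states. I claim a sub-unit exponent $e_{\ell,\ell'}\in(0,1)$ can arise only through a rarely-played action that lets the chain escape a fast two-cycle through $\ell$: unwinding the Puiseux expansion of the stationary distribution of that cycle, this forces a triple of distinct states $(\ell,\ell',\underline\ell)$ and actions realising a fast two-cycle $\ell\to\ell'\to\ell$ together with a rare exit $\ell\to\underline\ell$, i.e.\ exactly the pattern $q(\ell'\mid\ell,i,j)\,q(\ell\mid\ell',i',j')\,q(\underline\ell\mid\ell,\underline i,j)>0$ with $i\neq\underline i$ (the cycle using the actions that $(x_\la,y_\la)$ plays with positive limit, the exit using a secondary action of Player~$1$), or its mirror image for Player~$2$. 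Both are excluded by $(H1)$ and $(H2)$. Consequently every off-diagonal exponent of $Q_\la$ is $0$ or $\geq 1$, and each fast class is \emph{closed} under the $e=0$ dynamics: once entered it is never left on the natural time scale, so it behaves, for payoff purposes, like a single state whose value is the common value of its members.

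\textbf{Step 2: constructing a critical $\ep$-optimal pair.} I would then obtain $(x_\la^\ep,y_\la^\ep)$ by two modifications of $(x_\la,y_\la)$. First, rescale the action weights so that every off-diagonal transition probability becomes of exact order $\la$ (replace a weight $c\la^e$ with $e>1$ or $e=1$ as is, and since by Step~1 there are no fractional exponents left this does not disturb the qualitative structure); second, on each fast class replace the internal instantaneous dynamics by a strategy that keeps the chain inside the class with probability $1-O(\la)$ while exiting with the same limiting distribution — possible because the class is closed — so that $Q_\la^\ep=\Id+M\la+o(\la)$ for a continuous-time generator $M$, which makes the pair critical. It remains to bound the loss: the only effect of these modifications is that the chain now \emph{lingers} a $\Theta(1)$ fraction of the game in states it formerly crossed instantly, collecting their stage payoffs; exactly as for discounted absorbing games one compares $\lim_{\la\to0}\ga_\la^k$ before and after the deviation and invokes the optimality inequalities (cf.\ \eqref{PCA3}) to conclude that along each such transient state the stage payoff equals the value, so that lingering changes the cumulated payoff by $o(1)$; taking the rescaling fine enough, and adding if necessary a vanishing perturbation towards a fixed stationary $\ep$-optimal strategy to handle the boundary states where an instantaneous exit had a continuation value different from the current value, keeps the total loss below $\ep$.

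\textbf{Main obstacle.} The hard part is Step~1, extracting from an offending sub-unit exponent the explicit forbidden triple with the precise action pattern of $(H1)$/$(H2)$: one has to track the Puiseux expansions of the stationary distributions of the fast sub-chains to see exactly which actions of each player carry the cycle and which carry the exit, and verify the coincidence of Player~$2$'s action (resp.\ Player~$1$'s) and the distinctness of Player~$1$'s (resp.\ Player~$2$'s). Chaining several transient states in Step~2 while controlling the accumulated $\ep$ is then a routine finite induction modelled on the absorbing case.
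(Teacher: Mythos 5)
Your Step~1 is where the argument breaks, and it is a genuine gap rather than a fixable detail. The claim that $(H1)$--$(H2)$ force every off-diagonal Puiseux exponent of the matrix induced by $(x_\la,y_\la)$ to be $0$ or $\ge 1$ is false: a sub-unit exponent does not have to come from escaping a fast two-cycle. It arises as soon as some pair $(i,j)$ with $x_\la^\ell(i)\,y_\la^\ell(j)\sim\la^{e}$, $e\in(0,1)$, has $q(\ell'\mid \ell,i,j)>0$, and no return transition $\ell'\to\ell$ is needed for that. Absorbing games satisfy $(H1)$ and $(H2)$ vacuously (the middle factor $q(\ell\mid\ell',i',j')$ vanishes because $\ell'$ is absorbing), yet an optimal Puiseux family may put weight of order $\la^{1/2}$ on an absorbing action played against the opponent's main action, producing a transition of order $\la^{1/2}$; the paper's own treatment of absorbing games explicitly keeps the sets $I_*,J_*$ of actions with exponents in $(0,1)$. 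So you cannot conclude that the given optimal family has only exponents $0$ or $\ge 1$, and your subsequent lumping of ``fast classes'' does not by itself produce a matrix of the form $\Id+M\la+o(\la)$.

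The paper's proof goes the other way: it does not try to exclude sub-unit exponents, it modifies the strategies so that all relevant exits become of exact order $\la$. For a parameter $T$, every action with exponent $e(k,i)\le 1$ is replayed with probability proportional to $c(k,i)T^{1-e(k,i)}\la$ and the remaining actions are dropped; the induced chain $Q_\la^T$ is critical by construction, and $(H1)$, $(H2)$ are invoked exactly at the point your Step~2 leaves vague: they guarantee that $\lim_{\la\to 0}\ga_\la(x_\la,\jj)$ against any pure stationary reply $\jj$ depends only on the exit terms of highest order in each state, so the loss of $(\xx^T_\la,\yy^T_\la)$ is bounded by the difference of exit distributions and vanishes as $T\to\infty$, yielding asymptotic $\ep$-optimality. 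In your proposal the corresponding payoff bound is only asserted by analogy with the absorbing computation (``the stage payoff equals the value along transient states''), with $(H1)$--$(H2)$ never actually entering; since Step~1 is false and Step~2 is where the hypotheses must do the work, the proposal does not establish the proposition.
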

\begin{proof} Let $(x_\la)$ be a family of optimal stationary strategies so that $\la\mapsto x_\la^k(i)$ admit a Puiseux expansion near $0$ for all $(k,i)\in K\times I$ and let $c(k,i)$ and $e(k,i)$ so that 
$x_\la^k(i)=c(k,i)\la^{e(k,i)}+o(\la^{e(k,i)})$. For any $T\in \N$ and $\la$ consider the strategy $\xx_\la^T$ of Player~1 defined by $(\xx_\la^T)^k(i):=0$ if $e(k,i)>0$ and otherwise 
$$(\xx_\la^T)^k(i):=\displaystyle \frac{c(k,i)T^{1-e(k,i)}\la }{\sum_{i'\in I} c(k,i')T^{1-e(k,i')}\la \ind_{e(k,i)\leq 1}}\,.$$ % & \text{ if } e(k,i)\leq 1\,.$$ %\\  0 &  \text{otherwise}\,.\end{cases}$$ 
One defines a strategy $\yy_\la^T$ of Player~2 in a symmetric way. The family of Markov chains $(Q_\la^T)$ is critical by construction, so it is enough to prove that the strategies $(\xx_\la^T)$ and $(\yy_\la^T)$ are asymptotically $\ep$-optimal for $T$ large enough or, equivalently, that for any $\jj\in J^n$, 
$$\lim_{T\to +\infty} \lim_{\la\to 0} \ga_\la(\xx^T_\la,\jj)= \lim_{\la\to 0} \ga_\la(x_\la,\jj)=v\in \R^n\,.$$
This result follows from the fact that $(H1)$ implies that the $\lim_{\la\to 0} \ga_\la(x_\la,\jj)$ depends only on the exist terms of highest order for each state, so that the loss of $(\xx_\la^T)$ is bounded by the difference between the exit distributions, which tends to $0$. Similarly, $(H2)$ implies an analogue property for $(\yy^T_\la)$. 
\end{proof}

In the sequel, let $(x_\la,y_\la)$ a fixed family of optimal stationary strategies so that $\la\mapsto x^k_\la(i)$ and $\la\mapsto y_\la^k$ admit a Puiseux expansion near $0$ for all $(i,j)\in I\times J$, and let $(Q_\la)$ be the corresponding family of stochastic matrices. \textbf{We assume that $(Q_\la)$ is critical.} Let $(\si^\theta,\tau^\theta)$ be the pair of strategies indexed by $\theta$ defined in Section \ref{mainOB15}, and for each $m\geq 1$ let $Q_m^\theta\in \R^{n\times n}$ be the transition matrix induced by $(\si^\theta,\tau^\theta)$ at stage $m$. By the choice of $(x_\la,y_\la)$ for each $1\leq \ell,\ell'\leq n$ and $m\geq 1$, there exist $c(\ell,\ell')\geq 0$ and $e(\ell,\ell')\geq 0$ so that % be a coefficient and an coefficients so that 
$$Q^\theta_m(\ell,\ell')= c(\ell,\ell')(\la^\theta_m)^{e(\ell,\ell')}\,.$$ 
%Like in Section \ref{}, for each $m\geq 1$, let $Q_m^\theta\in \R^{n\times n}$ be the transition matrix %induced by $(\si^\theta,\tau^\theta)$ at stage $m$. 
%For any evaluation $\theta$ and $m\geq 1$ let 
%$$Q^\theta_m:=Q_{\la^\theta_m}\,.$$ 

\paragraph{A family of continuous-time processes indexed by $\theta$.}
First of all, the family $(Q_\la)$ being critical, every state is a payoff-relevant cycle. % are necessarily singletons, so assume that $\mathcal{C}:=\{1,\dots,p\}$ is the set of payoff-relevant cycles. 
Fix an initial state $1\leq k\leq n$. % be a fixed initial state. 
%Let $\mathcal{C}$ denote the set of payoff-relevant cycles, where $p\geq 1$ is their number. Because $(Q_\la)$ is critical, the payoff-relevant cycles are necessarily singletons. %Identify $\mathcal{C}$ with $\{1,\dots,p\}$ and 
%Set $T_0:=0$ and for all $m\geq 1$, let $T_m:=\inf\{M\geq T_{m-1}, \  k_M\in \mathcal{C}\}$ be the $m$-th visit to the set $\mathcal{C}$. 
Let $(Y^{k,\theta}_m)_{m\geq 1}$ be the random process of states $(k_m)$ under the law $\PP^k_{\si^\theta,\tau^\theta}$, which is a inhomogeneous Markov chain with transition matrices $(Q^\theta_m)$. For any $t,h\geq 0$ so that $0\leq t\leq t+h\leq 1$, let $J^\theta_{[t,t+h]}$ be the number of jumps (i.e. changes of state) of the process $(Y^{k,\theta}_m)_{m\geq 1}$ in the interval $[\varphi(t,\theta),\varphi(t+h,\theta)]$. Finally, let $(X^{k,\theta}_t)$ be the time-changed process defined on $[0,1]$ by %and with values in $\{1,\dots,p\}$ by %using $\varphi(\theta,\,\cdot\,)$ as a time-chance, i.e. 
$$X^{k,\theta}_t:= Y^{k,\theta}_{\varphi(t,\theta)}\qquad \forall t\in[0,1]\,.$$
\paragraph{Notation.} In the sequel, we will use the following notation. 
\begin{itemize}
\item For any $t,h\geq 0$ so that $0\leq t\leq t+h\leq 1$ define %$P^\theta_{t,t+h}\in \R^{n\times n}$ by setting %ford $1\leq s,s'\leq p$ let 
$$P_{t,t+h}^\theta:=\prod_{m=\varphi(t,\theta)}^{\varphi(t+h,\theta)}Q_m^\theta\,.$$
\item For all $t\in[0,1]$ and $1\leq \ell \leq n$, let $\PP^\ell_t$ denote the conditional probability on $\{X_t^{k,\theta}=\ell\}$, so that %. That is, %\PP^k_{\si^\theta,\tau^\theta}(\,\cdot\, | \, X_t^{k,\theta}=s\}$. That is,
for all $1\leq \ell, \ell'\leq n$ and $0\leq t\leq t+h\leq 1$,
 $$\PP^\ell_t(X^{k,\theta}_{t+h}=\ell'):= \PP(X^{k,\theta}_{t+h}=\ell'\,|\, X_t^{k,\theta}=\ell)=(P_{t,t+h}^\theta)^{\ell,\ell'}\,$$
 \end{itemize}

\begin{proposition}\label{lp} Let $(Q_\la)_\la$ be so that $Q_\la=\Id + A \la+o(\la)$ for a transition matrix $A\in \in \R^{n\times n}$. 
 %a critical family of stochastic matrices.
  Then, for all $t\in [0,1)$, %as $h$ goes to $0$
\begin{itemize}
 \item[$(i)$] $\lim_{\|\theta\|\to 0} \PP^\ell_t(X^{k,\theta}_{t+h}=\ell)=\displaystyle 1+\frac{A^{\ell,\ell} }{1-t}h + o(h)$. %, where $|A^{\ell,\ell}|:=\sum_{s'\neq \ell} A^{\ell,\ell'}$.
 \item[$(ii)$] $\lim_{\|\theta\|\to 0} \PP^\ell_t(X^{k,\theta}_{t+h}=\ell')=\displaystyle \frac{A^{\ell,\ell'}}{1-t}h + o(h)$.
\end{itemize}
\end{proposition}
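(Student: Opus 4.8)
The plan is to write the transition probabilities $\PP^\ell_t(X^{k,\theta}_{t+h}=\ell')=(P^\theta_{t,t+h})^{\ell,\ell'}$ as a product over the stages between $\varphi(t,\theta)$ and $\varphi(t+h,\theta)$, take logarithms where the diagonal terms are concerned, and then pass to the limit $\|\theta\|\to 0$ using Lemma~\ref{tecnico1}. The hypothesis $Q_\la=\Id+A\la+o(\la)$ forces every exit exponent $e(\ell,\ell')$ (for $\ell'\neq\ell$) to equal $1$ with $c(\ell,\ell')=A^{\ell,\ell'}$, and the diagonal entry to be $1+A^{\ell,\ell}\la+o(\la)$ with $A^{\ell,\ell}=-\sum_{\ell'\neq\ell}A^{\ell,\ell'}$; this is exactly the "critical" situation, so Lemma~\ref{tecnico1} applies with $e=1$ and yields the constant $\ln\!\left(1-\frac{h}{1-t}\right)$.

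For part $(i)$: the probability of staying at $\ell$ throughout the block, $\prod_{m=\varphi(t,\theta)}^{\varphi(t+h,\theta)}Q^\theta_m(\ell,\ell)=\prod_m\bigl(1+A^{\ell,\ell}\la^\theta_m+o(\la^\theta_m)\bigr)$, has logarithm $\sum_m\bigl(A^{\ell,\ell}\la^\theta_m+o(\la^\theta_m)\bigr)$, which by Lemma~\ref{tecnico1} converges to $A^{\ell,\ell}\ln\!\left(1-\frac{h}{1-t}\right)$ as $\|\theta\|\to 0$. Hence $\lim_{\|\theta\|\to 0}\PP^\ell_t(X^{k,\theta}_{t+h}=\ell)=\bigl(1-\frac{h}{1-t}\bigr)^{-A^{\ell,\ell}}$; this is the exact limit, and expanding in $h$ (using $\bigl(1-\frac{h}{1-t}\bigr)^{-A^{\ell,\ell}}=1+\frac{-A^{\ell,\ell}\cdot(-h)}{1-t}+o(h)=1+\frac{A^{\ell,\ell}}{1-t}h+o(h)$, wait—sign check: $\frac{d}{dh}(1-\frac{h}{1-t})^{-A^{\ell,\ell}}\big|_{h=0}=-A^{\ell,\ell}\cdot\frac{-1}{1-t}=\frac{A^{\ell,\ell}}{1-t}$) gives the stated first-order form. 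Actually it is cleaner to first reduce to a block so short that the process is very unlikely to leave $\ell$ and come back, so that staying-probability and one-step-transition agree to first order; I would fix $h$ small, bound the probability of two or more jumps by $O(h^2)$ uniformly in $\theta$ (each "escape and return" costs two exit factors, each summing to $O(h)$ by Lemma~\ref{tecnico1} with $e=1$), and absorb this into the $o(h)$.

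For part $(ii)$: the dominant contribution to $(P^\theta_{t,t+h})^{\ell,\ell'}$ comes from a single jump $\ell\to\ell'$ occurring at some stage $m$ in the block, contributing $\sum_{m=\varphi(t,\theta)}^{\varphi(t+h,\theta)}\bigl(\text{prob. still at }\ell\text{ at }m\bigr)\,A^{\ell,\ell'}\la^\theta_m\,\bigl(\text{prob. stay at }\ell'\text{ afterwards}\bigr)$; paths with two or more jumps contribute $O(h^2)$ by the same bound as above. Using part $(i)$ to say the two "stay" probabilities are $1+O(h)$, this sum is $A^{\ell,\ell'}\bigl(1+O(h)\bigr)\sum_m\la^\theta_m$, and Lemma~\ref{tecnico1} gives $\sum_m\la^\theta_m\to\ln\!\left(1-\frac{h}{1-t}\right)^{-1}\!\!\!$—wait, $=-\ln(1-\frac{h}{1-t})=\frac{h}{1-t}+o(h)$, so the limit is $\frac{A^{\ell,\ell'}}{1-t}h+o(h)$, as claimed. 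The main obstacle is making the "multiple jumps are negligible" bound rigorous and uniform in $\theta$: one must verify, via Lemma~\ref{tecnico1} applied to each off-diagonal exit exponent (all equal to $1$ here), that the expected number of jumps in $[\varphi(t,\theta),\varphi(t+h,\theta)]$ is $O(h)$ uniformly for small $\|\theta\|$, whence the probability of $\geq 2$ jumps is $O(h^2)$; everything else is bookkeeping with the Puiseux coefficients and the three cases of Lemma~\ref{tecnico1}.
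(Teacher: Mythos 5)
Your proposal is correct and follows essentially the same route as the paper: decompose the block $[\varphi(t,\theta),\varphi(t+h,\theta)]$ by the number of jumps, compute the no-jump probability as a product of diagonal entries whose logarithm converges via Lemma~\ref{tecnico1} (case $e=1$) to $A^{\ell,\ell}$ times $\frac{h}{1-t}+o(h)$, bound the probability of two or more jumps by the square of a one-jump probability (hence $O(h^2)=o(h)$, which also fixes your momentary conflation of the staying probability with $\PP^\ell_t(X^{k,\theta}_{t+h}=\ell)$), and treat the single-jump event using the limiting exit rates $A^{\ell,\ell'}$. The only cosmetic differences are that in $(ii)$ you sum directly over the stage at which the jump occurs whereas the paper conditions on the exit distribution $A^{\ell,\ell'}/|A^{\ell,\ell}|$, and that you silently use the correct (positive) value of the limit in Lemma~\ref{tecnico1} --- both immaterial.
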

\begin{proof} 
\textbf{$(i)$}
%\textbf{Notation.} Define two deterministic times $T^\la_0:=t/\la$ and $T^\la_h:=(t+h)/\la$. 
 %By stationarity, $J^\theta_{[0,h]}$
% the event that $(X^{k,\theta}_m)_{m\geq 1}$ changes $k$ times of state in the interval $[\varphi(t,\theta),\varphi(t+h,\theta)]$, and let $F^\theta_{0,h}(k^+)=\bigcup_{\ell\geq k}F^\theta_{0,h}(\ell)$. 
Conditional to $\{X^{k,\theta}_{t}= \ell\}$, the event $\{X^{k,\theta}_{t+h}= \ell\}$ is the disjoint union of 
$\{J^\theta_{[t,t+h]}=0\}$ and $\{X^{k,\theta}_{t+h}= \ell\}\cap \{J^\theta_{[t,t+h]}\geq 2\}$.
% (the state is $s$ after more than $2$ jumps). 
%$$\{X^{k,\theta}_{t+h}= \ell\}= F^\theta_{0,h}(0) \cup  (\{X^{k,\theta}_{t+h}= \ell\}\cap F^\theta_{0,h}(2^+).$$
For the former, one has 
\begin{equation}\label{az}
 \lim_{\|\theta\|\to 0} \PP^\ell_t(J^\theta_{[t,t+h]}=0)=1 + \frac{A^{\ell,\ell}}{1-t}h+o(h)\,.
 \end{equation}
Indeed, 
\begin{eqnarray*}
 \lim_{\|\theta\|\to 0} \PP^\ell_t(J^\theta_{[t,t+h]}=0)&=& \lim_{\|\theta\|\to 0} \prod_{m=\varphi(t,\theta)}^{\varphi(t+h,\theta)}
 \PP^\ell_t(X^{k,\theta}_{m+1}=X^{k,\theta}_m),\\
 &=& \lim_{\|\theta\|\to 0} \prod_{m=\varphi(t,\theta)}^{\varphi(t+h,\theta)} \left(1-\sum_{s'\neq \ell} (Q^\theta_m)^{\ell,\ell'}\right),\\
 &=& \lim_{\|\theta\|\to 0} \prod_{m=\varphi(t,\theta)}^{\varphi(t+h,\theta)} \left(1-\la^\theta_m |A^{\ell,\ell}|+o(\la^\theta_m)\right),\\
%  &=& \lim_{\|\theta\|\to 0} \exp\left(-|A^{\ell,\ell}|\sum_{m=\varphi(t,\theta)}^{\varphi(t+h,\theta)}\la_m +\sum_{m=\varphi(t,\theta)}^{\varphi(t+h,\theta)} c(s)\la_m^{e(s)} \right),\\ 
    &=& \lim_{\|\theta\|\to 0} \exp\left(-|A^{\ell,\ell}|\sum_{m=\varphi(t,\theta)}^{\varphi(t+h,\theta)}\la^\theta_m \right),\\
    %  &=& \exp\left(A^{\ell,\ell}\ln\left(1-\frac{h}{1-t}\right)\right),\\ \label{az}
% &=& 1 + \frac{A^{\ell,\ell}}{1-t}h+o(h)\,. % \quad \text{ as } h\to 0.
\end{eqnarray*}
and the result follows from Lemma \ref{tecnico1}. For the latter, namely $\{X^{k,\theta}_{t+h}= \ell\}\cap \{J^\theta_{[t,t+h]}\geq 2\}$, one has 
%On the other hand, for any $\theta$,
\begin{equation}\label{2jumps}\PP^\ell_t(J^\theta_{[t,t+h]}\geq 2)\leq \max_{1\leq \ell'\leq n} \PP^t_{\ell'}(J^\theta_{[0,h]}\geq 1)^2=\max_{1\leq \ell'\leq n}\left(1-\PP^t_{\ell'}(J^\theta_{[t,t+h]}=0)\right)^2\,.
\end{equation}
Therefore, $\lim_{\|\theta\|\to 0} \PP^\ell_t(J^\theta_{[t,t+h]}\geq 2)=o(h)$, which together with \eqref{az} proves the desired result. \\
\textbf{$(ii)$} Similarly, conditional on $\{X^{k,\theta}_{t}= \ell\}$, 
$$\{X^{k,\theta}_{t+h}=\ell'\}= \{X^s_{t+h}=\ell'\} \cap \left(\{ J^\theta_{[t,t+h]}=1\} \cup  \{J^\theta_{[t,t+h]}\geq 2\} \right)\,.$$
Together with \eqref{2jumps} this equality yields
$$\lim_{\|\theta\|\to 0} \PP^\ell_t(X^{k,\theta}_{t+h}=\ell')=\lim_{\|\theta\|\to 0} \PP^\ell_t(J^\theta_{[t,t+h]}=1,\, X^{k,\theta}_{t+h}=\ell')+o(h)\,.$$
Conditional on leaving the state $\ell$ at stage $m$, the probability of going to $\ell'\neq \ell$ is given by 
$$\PP(X^{k,\theta}_{m+1}=\ell'\,|\, X^{k,\theta}_m= \ell, X^{k,\theta}_{m+1}\neq \ell)=\frac{(Q^\theta_m)^{\ell,\ell'}}{\sum_{\ell''\neq \ell} (Q^\theta_m)^{\ell,\ell''}}\,.$$
By assumption, this converges to $\frac{A^{\ell,\ell'}}{|A^{\ell,\ell}|}$ as $\|\theta\|$ goes to $0$.
%$$\lim_{\|\theta\|\to 0} \frac{Q_m(\ell,\ell')}{\sum_{\ell''\neq \ell'} Q_m(\ell,\ell'')}=\frac{A^{\ell,\ell'}}{|A^{\ell,\ell}|}.$$
On the other hand, \eqref{2jumps} implies $$\lim_{\|\theta\|\to 0} \PP^\ell_t(J^\theta_{[t,t+h]}=1)=\lim_{\|\theta\|\to 0} 1-\PP^\ell_t(J^\theta_{[t,t+h]}=0)+o(h)\,.$$
Consequently, one has 
\begin{eqnarray*}\label{ert}\lim_{\|\theta\|\to 0} \PP^\ell_t(J^\theta_{[t,t+h]}=1,\, X^{k,\theta}_{t+h}=\ell')&=&\lim_{\|\theta\|\to 0}
\frac{(Q^\theta_m)^{\ell,\ell'}}{\sum_{\ell''\neq \ell} (Q^\theta_m)^{\ell,\ell''}}\left(1-\PP^\ell_t(J^\theta_{[t,t+h]}=0)+o(h)\right)\\ \label{az2}
&=&\frac{A^{\ell,\ell'}}{|A^{\ell,\ell}|}\left(\frac{|A^{\ell,\ell}|}{1-t}h+o(h)\right)\\
&=&\frac{A^{\ell,\ell'}}{1-t}h+o(h),
\end{eqnarray*}
where we used \eqref{az} to deduce the second equality.  
\end{proof}
\begin{corollaire}\label{limitOB15} The processes $(X^{k,\theta}_{t})_{t \in [0,1]}$ converge, as $\theta$ tends to $0$, to a inhomogeneous Markov process % $(X^k_t)_{t\in[0,1]}$ 
with generators $\left(\frac{1}{1-t}A\right)_{t\in[0,1]}$.
% $(A_t)_{t\in[0,1]}$, $A_t=\frac{A}{1-t}$.
\end{corollaire}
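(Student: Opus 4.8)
The plan is to upgrade the infinitesimal transition estimates of Proposition~\ref{lp} into a genuine convergence-in-distribution statement for the time-changed processes $(X^{k,\theta}_t)_{t\in[0,1]}$. The limiting object should be the inhomogeneous Markov jump process on $\{1,\dots,n\}$ whose generator at time $t$ is $\frac{1}{1-t}A$; equivalently, its flow of transition matrices is $(R_{t,t+h})$ solving $\partial_h R_{t,t+h}=R_{t,t+h}\cdot\frac{1}{1-t+h}A$ with $R_{t,t}=\Id$, which by the same computation as in Theorem~\ref{aer} is $R_{t,t+h}=e^{-(\ln(1-t)-\ln(1-t-h))A}$. Since the state space is finite, convergence of finite-dimensional distributions amounts to showing that for every $0\le t_0<t_1<\dots<t_r<1$ the joint laws of $(X^{k,\theta}_{t_0},\dots,X^{k,\theta}_{t_r})$ converge to those of the candidate process. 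By the Markov property of each $(X^{k,\theta}_t)$ (it is $(Y^{k,\theta}_m)$ sampled along the nondecreasing clock $\varphi(\cdot,\theta)$, hence still Markov), it suffices to prove that for each fixed $0\le t<t+h<1$ the transition matrices converge:
\begin{equation*}
\lim_{\|\theta\|\to 0} P^\theta_{t,t+h}=e^{-(\ln(1-t)-\ln(1-t-h))A}.
\end{equation*}

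The key steps, in order. First, fix $t$ and $h$ and subdivide $[t,t+h]$ into $N$ pieces $t=s_0<s_1<\dots<s_N=t+h$ of mesh $h/N$. By the multiplicativity $P^\theta_{t,t+h}=P^\theta_{s_0,s_1}\cdots P^\theta_{s_{N-1},s_N}$ (valid up to the negligible overlap of a single index at each junction, which contributes a factor $\Id+O(\|\theta\|)\to\Id$), it is enough to control each short block. Second, apply Proposition~\ref{lp}: for each block $[s_i,s_{i+1}]$ of width $h/N$,
\begin{equation*}
\lim_{\|\theta\|\to 0} P^\theta_{s_i,s_{i+1}}=\Id+\frac{1}{1-s_i}A\cdot\frac{h}{N}+o(h/N),
\end{equation*}
the $o$ being uniform over the finitely many blocks. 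Third, take the product of these $N$ limits; this is a Riemann-sum approximation to the product integral $\prod_t^{t+h}\bigl(\Id+\frac{1}{1-s}A\,ds\bigr)$, and letting $N\to\infty$ identifies it with $e^{-(\ln(1-t)-\ln(1-t-h))A}$ by the standard product-integral/ODE computation (the matrices $\frac{1}{1-s}A$ all commute, so no ordering subtlety arises and one may simply integrate the scalar factor $\int_t^{t+h}\frac{ds}{1-s}=\ln(1-t)-\ln(1-t-h)$). Fourth, a diagonal argument in $N$ versus $\|\theta\|$ (or an $\ep$-$N$ argument: choose $N$ so the Riemann sum is within $\ep$ of the integral, then $\|\theta\|$ small so each block is within $\ep/N$ of its limit) interchanges the two limits and yields the claimed convergence of $P^\theta_{t,t+h}$. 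Finally, assemble the finite-dimensional distributions: $\PP^k_{\si^\theta,\tau^\theta}(X^{k,\theta}_{t_1}=\ell_1,\dots,X^{k,\theta}_{t_r}=\ell_r)$ factors as a product of entries of $P^\theta_{0,t_1},P^\theta_{t_1,t_2},\dots$, each of which converges by the above, so the joint law converges to the corresponding product of entries of $R_{0,t_1},R_{t_1,t_2},\dots$, i.e. to the law of the inhomogeneous Markov process with generators $(\frac{1}{1-t}A)_{t\in[0,1)}$.

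The main obstacle is the uniformity in the interchange of limits: Proposition~\ref{lp} gives, for each fixed short interval, an expansion with an $o(h)$ remainder, but to take a product of $N\sim 1/\delta$ such factors and still control the accumulated error one needs the remainder to be uniform in the position $s_i$ of the block (which holds because there are finitely many states and the coefficients $c(\ell,\ell'),e(\ell,\ell')$ are fixed, so the estimates in Lemma~\ref{tecnico1} and Proposition~\ref{lp} are uniform for $s_i$ ranging over a compact subinterval of $[0,1)$), and one must keep $t+h$ bounded away from $1$ so that $\frac{1}{1-s}$ stays bounded on $[t,t+h]$. A secondary, purely bookkeeping point is the treatment of the shared endpoint index between consecutive blocks in the definition of $P^\theta_{t,t+h}$ via $\varphi$; this introduces at most one extra transition matrix $Q^\theta_{m}=\Id+O(\la^\theta_m)=\Id+O(\|\theta\|/(1-t-h))$ per junction, and since there are only $N$ junctions the total distortion is $O(N\|\theta\|)\to 0$ once $N$ is frozen before sending $\|\theta\|\to 0$. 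With these uniformities in hand the argument is routine.
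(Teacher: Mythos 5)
Your argument is correct in spirit but takes a genuinely different route from the paper. The paper's proof is a two-step weak-convergence argument on path space: Proposition~\ref{lp} is used only to \emph{identify} the limit (convergence of the time-dependent generators), and the convergence of the processes is then obtained by a \emph{tightness} argument, using the two-jump estimate \eqref{2jumps} from part $(ii)$ to verify the standard tightness criterion for c\`adl\`ag processes with finitely many values. You instead bypass tightness and prove convergence of the two-time transition matrices $P^\theta_{t,t+h}$ by subdividing $[t,t+h]$ into $N$ blocks, applying Proposition~\ref{lp} on each block, and passing to the product integral, then assemble finite-dimensional distributions via the Markov property of the sampled chain. What your route buys is an explicit limit flow $P^\theta_{t,t+h}\to e^{\ln\left(\frac{1-t}{1-t-h}\right)A}$, which feeds directly into Corollary~\ref{Pitheta} (via Lemma~\ref{convth}) without ever discussing path-space topology; what it costs is that, as written, it only yields convergence of finite-dimensional distributions. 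If the corollary is read (as the paper's own proof suggests) as weak convergence of the processes in the Skorokhod sense, you still need a tightness step --- but this is cheap with what you already have: Proposition~\ref{lp}$(ii)$ together with \eqref{2jumps} gives, uniformly in $\theta$ and in the location of the interval within a compact subset of $[0,1)$, that the probability of two jumps in an interval of length $\ep$ is $O(\ep^2)$, which is exactly the criterion the paper invokes. Your worry about uniformity of the $o(h/N)$ errors over the blocks, and the bookkeeping at block junctions, are the right technical points and are handled correctly.

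Two small corrections. First, there is a sign slip: the limiting flow should be $R_{t,t+h}=e^{(\ln(1-t)-\ln(1-t-h))A}=e^{\ln\left(\frac{1-t}{1-t-h}\right)A}$ (a stochastic matrix, consistent with $\Pi_t=\int_0^t e^{-\ln(1-s)A}\,ds$ in Corollary~\ref{Pitheta}), not $e^{-(\ln(1-t)-\ln(1-t-h))A}$; your own scalar integral $\int_t^{t+h}\frac{ds}{1-s}=\ln(1-t)-\ln(1-t-h)$ is the correct exponent. Second, keep the statement restricted to $t+h$ bounded away from $1$, as you note, since the generator $\frac{1}{1-t}A$ blows up at $t=1$; this restriction is harmless for the subsequent integration.
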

\begin{proof}
 The limit is identified by Proposition \ref{lp}. The
 tightness is a consequence of $(ii)$. Indeed, it implies that for any $T>0$,
 uniformly in $\theta$:
 $$\lim_{\ep\to 0} \PP\left(\exists t_1,t_2\in [0,T] \, | \, t_1<t_2<t_1+\ep,\ X^{k,\theta}_{t_1^-}\neq X^{k,\theta}_{t_1},\ X^{k,\theta}_{t_2^-}\neq X^{k,\theta}_{t_2}
 \right)=0,$$
which is precisely the tightness criterion for c\`adl\`ag process with discrete values.
 \end{proof}
\begin{corollaire}\label{Pitheta} For all $t\in [0,1]$ the following limit exist: % (and does not depend on the family of evaluations
$$\Pi_t:= \lim_{\|\theta\|\to 0} \sum_{m= 1}^{\varphi(\theta,t)} \theta_m \prod_{m'=1}^{m}  Q^\theta_{m'} = \int_{0}^t e^{-\ln(1-s)A} ds\,.$$
% P_t dt.$$ 
\end{corollaire}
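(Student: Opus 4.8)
The statement to prove is Corollary~\ref{Pitheta}: the matrix-valued limit $\Pi_t := \lim_{\|\theta\|\to 0} \sum_{m=1}^{\varphi(\theta,t)} \theta_m \prod_{m'=1}^m Q^\theta_{m'}$ exists and equals $\int_0^t e^{-\ln(1-s)A}\,ds$.

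The plan: the summand $\theta_m \prod_{m'=1}^m Q^\theta_{m'}$ should be recognized as $\theta_m$ times the law of the state at stage $m$ under $\PP^k_{\si^\theta,\tau^\theta}$ — that is, $\prod_{m'=1}^m Q^\theta_{m'}$ is the row-$k$ transition of the inhomogeneous chain from stage $1$ to stage $m$, whose $(k,\ell)$ entry is exactly $\PP(X^{k,\theta}_{t_m^\theta} = \ell)$ where $t_m^\theta = \sum_{m'\le m}\theta_{m'}$. So I would set $P_t^\theta$ to be the matrix with entries $\PP^{k'}(X^{k,\theta}_t = \ell)$ and invoke Corollary~\ref{limitOB15}: the processes $(X^{k,\theta}_t)$ converge as $\|\theta\|\to 0$ to the inhomogeneous Markov process with generators $\left(\tfrac{1}{1-t}A\right)_{t\in[0,1]}$. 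The finite-dimensional marginals of that limit process are governed by the Kolmogorov forward equation $\tfrac{d}{dt}P_t = P_t \cdot \tfrac{1}{1-t}A$ with $P_0 = \Id$, whose unique solution is $P_t = e^{-\ln(1-t)A}$ (one checks $\tfrac{d}{dt}(-\ln(1-t)A) = \tfrac{1}{1-t}A$ and the matrices $-\ln(1-s)A$ all commute, so the matrix exponential really does solve the ODE). Hence $P_t^\theta \to e^{-\ln(1-t)A}$ entrywise for each fixed $t$, with entries uniformly bounded in $[0,1]$ since these are stochastic matrices.

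With that pointwise convergence of $P_t^\theta$ in hand, I would apply Lemma~\ref{convth} with $a^\theta_m := (P^\theta_{t^\theta_m})^{k,\ell}$ (for each fixed pair $k,\ell$, i.e. entrywise), $f(t,\theta) := \varphi(\theta,t)$ — which indeed satisfies $\lim_{\|\theta\|\to 0}\sum_{m=1}^{f(t,\theta)}\theta_m = t$ — and limit function $a(s) := (e^{-\ln(1-s)A})^{k,\ell}$. The hypothesis of Lemma~\ref{convth} is precisely that $a^\theta_{f(t,\theta)} \to a(t)$, which is the convergence just established (noting $t^\theta_{\varphi(\theta,t)} \to t$). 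Lemma~\ref{convth} then gives $\lim_{\|\theta\|\to 0}\sum_{m=1}^{\varphi(\theta,t)} \theta_m (P^\theta_{t^\theta_m})^{k,\ell} = \int_0^t (e^{-\ln(1-s)A})^{k,\ell}\,ds$ for every entry, i.e. the matrix identity claimed. Existence and the explicit formula come out together.

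The main obstacle is the bookkeeping that makes $\prod_{m'=1}^m Q^\theta_{m'} = P^\theta_{t^\theta_m}$ legitimate and lets me pass from ``convergence of the processes'' (Corollary~\ref{limitOB15}) to ``entrywise convergence of the transition matrices at a fixed time''. Two points need care: first, $\varphi(\theta,t)$ is a deterministic stopping index, not a random time, so $(P^\theta_{t,t+h})^{\ell,\ell'}$ defined in the Notation paragraph is genuinely a matrix product of the $Q^\theta_m$ over $\varphi(\theta,t)\le m\le \varphi(\theta,t+h)$ — one must be slightly careful about the fencepost ($\varphi(\theta,t)$ included or not) but it contributes a single factor $\Id + O(\|\theta\|)$ and is harmless in the limit. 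Second, convergence in the Skorokhod sense from Corollary~\ref{limitOB15} gives convergence of one-dimensional marginals only at continuity points of the limit law; since the limit process is a.s. continuous at each fixed $t$ (a pure-jump chain with finitely many states jumps at a.s. non-deterministic times), $\PP(X^{k,\theta}_t = \ell) \to \PP(X_t = \ell)$ for every fixed $t\in[0,1)$, which is all that is needed. The dominated-convergence step inside Lemma~\ref{convth} then closes everything with no further work.
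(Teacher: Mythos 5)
Your proposal follows exactly the paper's route: the paper derives Corollary~\ref{Pitheta} precisely by combining Corollary~\ref{limitOB15} with Lemma~\ref{convth}, and your argument is the same, merely spelled out in more detail (identifying $\prod_{m'}Q^\theta_{m'}$ with the marginal law, solving the forward equation to get $e^{-\ln(1-t)A}$, and applying the lemma entrywise). The additional care you take with the Skorokhod-convergence and fencepost issues is sound and fills in steps the paper leaves implicit.
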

Corollary \ref{Pitheta} follows from Corollary~\ref{limitOB15} together with Lemma \ref{convth}. 

%\paragraph{Proof of Corollary \ref{Pitheta}.} By Corollary~\ref{limitOB15}, for any $s\in [0,1]$ one has  \prod_{m'=1}^{m}  Q^\theta_{m'} =
% for any $1\leq \ell,\ell'\leq n$ \hfill $\blacksquare$

%\begin{corollaire} The asymptotic position at time $t$ is given by 
%$$P_t=\exp\left(\int_0^t \frac{A}{1-s}ds\right) = \exp(-\ln(1-t) A)\,.$$
%\end{corollaire}
%\begin{remarque} Note that $P_{1-e^{-t}}=e^{A t}$ so that the time-changed process $Z_t:=Y_{1-e^{-t}}$ for all $t\geq 0$ is a homogeneous Markov chain. 
%\end{remarque}

\bibliographystyle{amsplain} 
\bibliography{bibliothese2} % pour afficher la biblio

\end{document}